\newcommand{\bc}{\begin{center}}
\newcommand{\finsub}[0]{{\subseteq}_{\text{\sf{fin}}}}
\newcommand{\ec}{\end{center}}
\newcommand{\be}{\begin{enumerate}}
\newcommand{\ee}{\end{enumerate}}
\newcommand{\beq}{\begin{equation}}
\newcommand{\eeq}{\end{equation}}
\newcommand{\bi}{\begin{itemize}}
\newcommand{\ei}{\end{itemize}}
\newcommand{\bd}{\begin{description}}
\newcommand{\ed}{\end{description}}
\newcommand{\ba}{\begin{array}}
\newcommand{\bea}{\begin{eqnarray*}}
\newcommand{\eea}{\end{eqnarray*}}
\newcommand{\ea}{\end{array}}
\newcommand{\bt}{\begin{tabular}}
\newcommand{\et}{\end{tabular}}
\newcommand{\mb}{\mbox}
\newcommand{\bmi}{\begin{minipage}}
\newcommand{\emi}{\end{minipage}}
\newcommand{\lb}{\linebreak}
\newtheorem{stel}{Theorem}[section]
\newtheorem{defin}[stel]{Definition}
\newtheorem{definlem}[stel]{Definition-Lemma}
\newtheorem{defintheo}[stel]{Definition-Theorem}
\newtheorem{lemm}[stel]{Lemma}
\newtheorem{rem}[stel]{Remark}
\newtheorem{theo}[stel]{Theorem}
\newcommand{\myitem}[1]{%
\item[#1]\protected@edef\@currentlabel{#1}%
}
\newcommand{\Matrix}[1]
    {\begin{pmatrix}
      \Matrix@r #1;\@bye;\Matrix@r
     \end{pmatrix}}
\def\Matrix@r #1;{\@bye #1\Matrix@z\@bye\Matrix@s #1,\@bye, }%
\def\Matrix@s #1,{#1\Matrix@t }%
\def\Matrix@t #1,{\@bye #1\Matrix@y\@bye\@firstofone {&#1}\Matrix@t}%
\def\Matrix@y #1\Matrix@t{\\ \Matrix@r }%
\def\Matrix@z #1\Matrix@r {}
\def\@bye  #1\@bye   {}
\newcommand{\fr}[1]{\mathfrak{#1}}
\newcommand*\xoverline[2][0.75]{%
    \sbox{\myboxA}{$\m@th#2$}%
    \setbox\myboxB\null
    \ht\myboxB=\ht\myboxA%
    \dp\myboxB=\dp\myboxA%
    \wd\myboxB=#1\wd\myboxA
    \sbox\myboxB{$\m@th\overline{\copy\myboxB}$}
    \setlength\mylenA{\the\wd\myboxA}
    \addtolength\mylenA{-\the\wd\myboxB}%
    \ifdim\wd\myboxB<\wd\myboxA%
       \rlap{\hskip 0.5\mylenA\usebox\myboxB}{\usebox\myboxA}%
    \else
        \hskip -0.5\mylenA\rlap{\usebox\myboxA}{\hskip 0.5\mylenA\usebox\myboxB}%
    \fi}
\begin{document}

\bc {\bf\large Multiplicative near-vector spaces}\\[3mm]
{\sc L Boonzaaier, S. Marques and D. Moore} 

\it\small
rain (Pty) Ltd, 
Cape Quarter,
Somerset Road, 
Green Point, 8005,\lb
South Africa\\
\rm e-mail address: leandro.boonzaaier@rain.co.za

\it
Department of Mathematical Sciences, 
University of Stellenbosch, 
Stellenbosch, 7600,\lb
South Africa\\
\&
NITheCS (National Institute for Theoretical and Computational Sciences, 
South Africa \\
\rm e-mail: smarques@sun.ac.za

\it
Department of Mathematical Sciences, 
University of Stellenbosch, 
Stellenbosch, 7600,\lb
South Africa\\
\rm e-mail: dmoore@sun.ac.za
\ec

\begin{abstract} 

\noindent Near-vector spaces extend linear algebra tools to non-linear algebraic structures, enabling the study of non-linear problems. However, explicit constructions remain rare. This paper introduces a broad computable family of near-vector spaces, called multiplicative, and explores their properties. This family is fully determined over finite, real, and complex fields. We also discuss the existence of infinite coproducts, and products in the category of near-vector spaces. Finally, we introduce the complexification of a multiplicative near-vector space over the real numbers.
\end{abstract} 

{\bf Key words:} Near-vector spaces, Near-fields, Multiplicative near-field automorphisms, Multiplicative near-vector spaces, Infinite product, Infinite coproduct, Complexification.

{\it 2020 Mathematics Subject Classification:} 16Y30; 05B35

\tableofcontents

\section{Introduction}
The notion of near-vector spaces was introduced by J.~André in \cite{Andre} as a generalization of vector spaces, where only one side of the distributive law is required to hold. This extension allows linear algebra tools to be applied to non-linear algebraic structures, facilitating the study of non-linear problems. It provides a framework for analyzing algebraic structures that exhibit non-linear behavior. Various aspects of near-vector spaces have been studied (see, for example, \cite{LSD2024, DeBruyn, HCW, HowMar, HowMey, HowRab, MarquesMoore, Dani}).

\noindent Near-linear algebra has the potential for real-world applications, tackling more complex problems than traditional linear algebra. However, without a broad range of concrete examples, these applications may remain out of reach. This paper addresses this challenge by introducing a large computable family of near-vector spaces, called multiplicative, which can already be explicitly constructed over finite fields, real fields, and complex fields, laying the groundwork for future applications. Real and complex vector spaces, along with those over finite fields, form a significant foundation of linear algebra. This family of near-vector spaces holds great promise in extending traditional near-linear algebra techniques.

\noindent With this family of near-vector spaces in hand, we can explore fundamental questions such as the existence of infinite products in the category of near-vector spaces. On one hand, we establish conditions on the scalar group $F$ ensuring that the infinite product of near-vector spaces remains a near-vector space over $F$ (see Theorem \ref{infprod} and Remark \ref{infprodrem}). On the other hand, we provide a counterexample demonstrating that this property does not always hold (see Section \ref{real}).

\noindent The paper is structured as follows:

\noindent In the next section, we introduce foundational material on the theory of near-vector spaces, establishing key definitions and results. The section concludes with a proof that the category of near-vector spaces admits infinite coproducts (see Definition-Lemma \ref{V^I}).

\noindent In section three, we recall the definition of a near-field induced by a multiplicative automorphism. We introduce the notion of a {multiplicative near-vector space}, a near-vector space constructed from families of multiplicative near-field automorphisms (see Definition \ref{multNVS}). We then compute the {quasi-kernel} for such spaces (see Theorem \ref{nvss}) and describe their {regular decomposition} (see Definition-Theorem \ref{pluses}). Additionally, we provide multiple characterizations of multiplicative near-vector spaces (see Theorem \ref{mult}). The section concludes with a proof that, under certain finiteness conditions, the infinite direct product of near-vector spaces remains a near-vector space (see Theorem \ref{infprod}).

\noindent In the last section, we describe multiplicative near-vector spaces over finite fields, real fields, and complex fields. We present an example of an infinite family of near-vector spaces that does not admit an infinite direct product. Finally, we discuss the {complexification of real multiplicative near-vector spaces}, extending a classical process from linear algebra to this more general setting.

\section{Preliminary material and notation}

\noindent In this section, we introduce preliminary material that is necessary to discuss the theory of multiplicative near-field structures in the section to follow. We also introduce notations which will be used throughout this paper. 
\begin{defin}\cite[Definition 1.1]{MarquesMoore} \label{scalargroup}
A {\sf scalar group} \( F \) is a tuple \( F = (F, \cdot, 1, 0, -1) \) where \((F, \cdot, 1)\) is a monoid, \(0, -1 \in F\), satisfying the following conditions: 
\begin{itemize}
  \vspace{-0.5cm}    \item \(0 \cdot \alpha = 0 = \alpha \cdot 0\) for all \(\alpha \in F\);
    \item \(\{ \pm 1 \}\) is the solution set of the equation \(x^2 = 1\);
    \item \((F \setminus \{ 0 \}, \cdot, 1)\) is a group.
\end{itemize}
 \vspace{-0.5cm} 
For all \(\alpha \in F\), we denote \(-\alpha\) as the element \((-1) \cdot \alpha\).
\end{defin}
\noindent Throughout this paper, we will use the notation \( F \) to represent the tuple \((F, \cdot, 1, 0, -1)\) when the context is clear. For this section, unless otherwise stated, $F$ denotes a scalar group.
\begin{defin}
    \cite[Definition 1.4]{MarquesMoore}
\begin{enumerate}    
 \vspace{-0.5cm}  \item Given a scalar group $F$, we say that a {\sf (left) $F$-space} is a pair $(V, \mu)$ where $V$ is an abelian additive group and $\mu: F \times V \rightarrow V$ is a (left) scalar group action, that is \(\mu\) is a (left) monoid action of $(F,\cdot)$ on $(V,+)$ on the (left) by endomorphisms, such that $0$ acts trivially, and $\pm 1$ act as $\pm {\sf id}$. 
\item We say that $W$ is an {\sf $F$-subspace} of $V$ if $W$ is a nonempty subset of $V$ that is closed under addition and scalar multiplication.
\end{enumerate}
\end{defin}
\noindent Next, we define the notion of the quasi-kernel of an $F$-space.
\begin{defin}\cite[Definition 1.5]{MarquesMoore}
\label{quasikernel}
Let $V$ be an $F$-space. We define the {\sf quasi-kernel} of $V$ to be
$$Q(V) = \{u \in V \mid \forall {\alpha, \beta \in F}, \exists {\gamma \in F} \,[\alpha \cdot u + \beta \cdot u = \gamma \cdot u]\}.$$
\end{defin}
\noindent We now introduce the concept of a (left) near-vector space over a scalar group. We adhere to the definition provided in \cite{MarquesMoore}.
\begin{defin}\cite[Definition 1.6]{MarquesMoore}
\label{NVS2}
A {\sf (left) near-vector space over a scalar group $F$} is a triple \((V, F, \mu)\) that satisfies the following properties:
\begin{enumerate}
 \vspace{-0.5cm} \item \((V, \mu)\) is an $F$-space; 
    \item the (left) scalar group action \(\mu\) is free;
    \item the quasi-kernel \(Q(V)\) generates \(V\) as an additive group.
\end{enumerate}
 \vspace{-0.5cm} 
Any trivial abelian group can be endowed with a near-vector space structure through the trivial action. Such a space is referred to as a {\sf (left) trivial near-vector space over $F$}, denoted as \(\{0\}\). We call the elements of $V$ {\sf vectors}.
\end{defin}

\noindent It was shown in \cite[Corollary 3.6]{MarquesMoore} that every near-vector space admits a basis contained in its quasi-kernel. Since these basis elements belong to the quasi-kernel, the usual linear algebraic definition of a basis applies in this setting. We use this fact to introduce the following definition.  

\begin{defin}  
     A near-vector space is said to be {\sf finite dimensional} if it admits a finite basis contained in its quasi-kernel.  
\end{defin}

\noindent In this paper, the focus will be on (left) near-vector spaces, although all results can be extended to (right) near-vector spaces as well. We will use the term ``subspace" for ``$F$-subspace" and ``near-vector space" instead of ``(left) near-vector space," unless otherwise specified. Additionally, we will adopt the notations \(V\) in place of \((V, \mu)\) (or $(V,F)$) when $F$ and $\mu$ are evident.

 \noindent In the following, \(F\) denotes a scalar group and \(V\) denotes a near-vector space over \(F\) unless mentioned otherwise. For a scalar group \(F\), we denote \(F \setminus \{0\}\) as \(F^{*}\), and for any subset \(W\) of \(V\) containing \(0\), we denote \(W \setminus \{0\}\) as \(W^{*}\).

\noindent When André established a set of axioms for a near-vector space \((V,F)\), he observed that defining an addition operation on \(F\) was not necessary. In fact, if the quasi-kernel of \(V\) contains more than just the element \(0\), for any non zero element of the quasi kernel, a natural addition on \(F\) can be introduced as follows.
\begin{defin} \label{+u} \cite[Definition 2.3]{Andre}
For ${u} \in Q(V)^{*}$ and $\alpha, \beta \in F$, we denote $\alpha +_{{u}} \beta$ as the unique element $\gamma$ of $F$ satisfying $\alpha {u} + \beta {u} = \gamma {u}$.
\end{defin}
\noindent Given $u \in Q(V)^{*}$ and a scalar group $F$, the operation $+_{u}$ forms a group operation on $F$. Furthermore, it was established in \cite[Satz 2.4]{Andre} that $(F,+_{u},\cdot)$ is a near-field. We denote $(F,+_{u},\cdot)$ as $F_{u}$.
 
\noindent In order to compare near-vector spaces, we need the following definition.
\begin{defin}{\cite[Definition 3.2]{HowMey}}\label{homo}
Let $((V_{1},+_{1}),\cdot_1)$ and $((V_{2},+_{2}),\cdot_2)$ be near-vector spaces over $(F_{1},*_{1})$ and over $(F_{2}, *_{2})$ respectively.
\begin{enumerate}
    \vspace{-0.5cm} 
    \item A pair \((\theta, \eta)\) is called a {\sf homomorphism} of near-vector spaces if  \(\theta : (V_1, +_{1}) \rightarrow (V_2, +_{2})\) is an additive homomorphism and  \(\eta : (F_1^{*}, *_{1}) \rightarrow (F_2^{*}, *_{2})\) is a monoid isomorphism such that \(\theta(\alpha \cdot_{1} u) = \eta(\alpha) \cdot_{2} \theta(u)\) for all \( u \in V_1 \) and \(\alpha \in F_1^{*}\). A pair \((\theta, \eta)\) is called a {\sf isomomorphism}  of near-vector spaces if \((\theta, \eta)\) is a homomorphism of near-vector spaces and \( \theta \) is an isomorphism. 
    \item If \(\eta = \operatorname{Id}\), we say that \(\theta\) is an \( F \)-{\sf linear map} where $F = F_{1} = F_{2}$. 
    \item If \(\eta = \operatorname{Id}\) and $\theta$ is a bijection, we say that \(\theta\) is an {\sf \( F \)-isomorphism} of near-vector spaces. 
\end{enumerate}
\vspace{-0.5cm} When \( ((V_{1},+_{1}),\cdot_1) \) and \( ((V_{2},+_{2}),\cdot_2) \) are vector spaces over the division rings \( (F_{1}, \boxplus_1, *_{1}) \) and \( (F_{2}, \boxplus_2 , *_{2}) \), respectively, definitions \( 1 \), \( 2 \), and \( 3 \) can be adapted to this setting. In this case, \( \eta \) would be a morphism of division rings. 
\end{defin}
\noindent We include the following immediate lemma, which will be used in the proof of Theorem \ref{infprod}.

\begin{lemm}  
\label{finitedimensional}  
Let $((V_{1},+_{1}),\cdot_1)$ and $((V_{2},+_{2}),\cdot_2)$ be near-vector spaces over $(F_{1},*_{1})$ and $(F_{2}, *_{2})$, respectively, such that  
\[
(((V_{1},+_{1}),\cdot_1),(F_{1},*_{1}))\simeq(((V_{2},+_{2}),\cdot_2), (F_{2}, *_{2}))
\]  
as near-vector spaces. Then, $((V_{1},+_{1}),\cdot_1)$ is finite-dimensional over $(F_{1},*_{1})$ if and only if $((V_{2},+_{2}),\cdot_2)$ is finite-dimensional over $(F_{2}, *_{2})$.  

A similar result holds when $((V_{1},+_{1}),\cdot_1)$ and $((V_{2},+_{2}),\cdot_2)$ are vector spaces over division rings $(F_{1},\boxplus_1, *_{1})$ and $(F_{2}, \boxplus_2, *_{2})$, respectively, and satisfy  
\[
(((V_{1},+_{1}),\cdot_1),(F_{1},\boxplus_1,*_{1}))\simeq(((V_{2},+_{2}),\cdot_2), (F_{2},\boxplus_2, *_{2}))
\]  
as vector spaces.  
\end{lemm}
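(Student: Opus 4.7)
The plan is to transport a basis across the isomorphism. By Definition \ref{homo}, an isomorphism of near-vector spaces is a pair $(\theta,\eta)$ where $\theta:V_1\to V_2$ is a bijective additive homomorphism, $\eta:F_1^*\to F_2^*$ is a monoid isomorphism (extended to send $0$ to $0$), and $\theta(\alpha\cdot_1 u)=\eta(\alpha)\cdot_2\theta(u)$. A finite basis contained in $Q(V_1)$ should become, under $\theta$, a finite basis contained in $Q(V_2)$. Since the roles of $V_1$ and $V_2$ are symmetric (the inverse pair $(\theta^{-1},\eta^{-1})$ is again an isomorphism), it suffices to prove one direction.

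The first step is to verify that $\theta$ restricts to a bijection $Q(V_1)\to Q(V_2)$. Given $u\in Q(V_1)$ and arbitrary $\alpha_2,\beta_2\in F_2$, I would set $\alpha_1=\eta^{-1}(\alpha_2)$, $\beta_1=\eta^{-1}(\beta_2)$, pick $\gamma_1\in F_1$ with $\alpha_1\cdot_1 u +_1 \beta_1\cdot_1 u=\gamma_1\cdot_1 u$, apply $\theta$ to both sides, and use the compatibility relation to conclude that $\alpha_2\cdot_2\theta(u)+_2\beta_2\cdot_2\theta(u)=\eta(\gamma_1)\cdot_2\theta(u)$, proving $\theta(u)\in Q(V_2)$. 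Applying the same argument to $(\theta^{-1},\eta^{-1})$ gives the reverse inclusion.

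The second step is to transport a basis. Assume $V_1$ is finite-dimensional, so by \cite[Corollary 3.6]{MarquesMoore} and the definition of finite dimension, there is a finite basis $B=\{u_1,\dots,u_n\}\subseteq Q(V_1)$. I claim $\theta(B)=\{\theta(u_1),\dots,\theta(u_n)\}$ is a basis of $V_2$ contained in $Q(V_2)$. Containment in $Q(V_2)$ follows from the first step. Spanning and freeness (in the near-vector space sense, since the scalar action on $V_2$ is free) are immediate consequences of the bijectivity of $\theta$, the additivity of $\theta$, and the relation $\theta(\alpha\cdot_1 u_i)=\eta(\alpha)\cdot_2\theta(u_i)$, together with the fact that $\eta$ is a bijection $F_1^*\to F_2^*$. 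Since $\theta(B)$ is finite, $V_2$ is finite-dimensional.

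The vector-space case is in fact easier, since one does not need to pass through the quasi-kernel: any basis of $V_1$ maps under $\theta$ to a subset of $V_2$ which, thanks to the same compatibility relation together with $\eta$ being a ring isomorphism, is $F_2$-linearly independent and spanning. I do not anticipate a serious obstacle; the only point requiring care is ensuring that the quasi-kernel is preserved by $\theta$, which follows directly from the defining compatibility between $\theta$ and $\eta$.
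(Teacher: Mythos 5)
Your proposal is correct and follows essentially the same route as the paper: transport a finite basis $\mathcal{B}\subseteq Q(V_1)$ through the isomorphism $(\theta,\eta)$, checking spanning via surjectivity and the compatibility relation, and linear independence by pulling coefficients back along $\eta^{-1}$ and using injectivity of $\theta$, with the converse handled by $(\theta^{-1},\eta^{-1})$. Your explicit verification that $\theta$ maps $Q(V_1)$ into $Q(V_2)$ is a small additional care (needed so the transported basis sits in the quasi-kernel, as the definition of finite dimension requires) that the paper leaves implicit, but it does not change the argument.
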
  

\begin{proof}
    By assumption, we know that there exists an isomorphism $(\psi, \eta)$ from $((V_{1},+_{1}),\cdot_1)$ to $((V_{2},+_{2}),\cdot_2)$. Suppose that $((V_{1},+_{1}),\cdot_1)$ is finite-dimensional over $(F_{1},*_{1})$. Then, we can choose a basis $\mathcal{B}$ for $((V_{1},+_{1}),\cdot_1)$ such that $\mathcal{B} \subseteq Q(V)^{*}$ and $|\mathcal{B}| < \infty$.  

    We aim to show that $\psi(\mathcal{B})$ is a basis for $((V_2,+_2),\cdot_2)$ over $(F_2,*_2)$.  

    First, we prove that $\psi(\mathcal{B})$ generates $V_{2}$. Take any $w \in V_{2}$. Since $\psi$ is surjective, there exists $v \in V_{1}$ such that $w = \psi(v)$. Since $\mathcal{B}$ is a basis for $V_1$, we can express $v$ as  
    \[
    v = {}^{1}\!\sum_{b \in \mathcal{B}}\lambda_{b} \cdot_1 b, \quad \text{for some } \lambda_{b} \in F_{1}, \forall b \in \mathcal{B}.
    \]
    Applying $\psi$ and using that $(\psi, \eta)$ is a homorphism of near-vector space, we obtain  
    \[
    w = \psi(v) = {}^{2}\!\sum_{b \in \mathcal{B}}\eta(\lambda_{b})\cdot_{2}\psi(b),
    \]
    which shows that $\psi(\mathcal{B})$ spans $V_{2}$.  

    Now, we prove linear independence. Suppose that there exist $\gamma_{b} \in F_{2}$ for all $b \in \mathcal{B}$ such that  
    \begin{equation}
    \label{first}
       {}^{2}\!\sum_{b \in \mathcal{B}}\gamma_{b} \cdot_2\psi( b) = 0.
    \end{equation}
    Since $\eta$ is an isomorphism, for each $b \in \mathcal{B}$, there exists $\lambda_{b} \in F_{1}$ such that $\gamma_{b} = \eta(\lambda_{b})$. Substituting into (\ref{first}), we get  
    \[
    {}^{2}\!\sum_{b \in \mathcal{B}}\eta(\lambda_{b})\cdot_{2}\psi(b) = 0.
    \]
    Since $(\psi, \eta)$ is an isomorphism of near-vector spaces, this implies  
    \[
    \psi\left({}^{1}\!\sum_{b \in \mathcal{B}}\lambda_{b}b\right) = 0.
    \]
    Applying injectivity of $\psi$, we obtain  
    \[
    {}^{1}\!\sum_{b \in \mathcal{B}}\lambda_{b}b = 0.
    \]
    Since $\mathcal{B}$ is a basis for $((V_1,+_1),\cdot_1)$ over $(F_1,*_1)$, it follows that $\lambda_{b} = 0$ for all $b \in \mathcal{B}$. Hence, $\gamma_{b} = \eta(\lambda_{b}) = 0$, and so $\lambda_b=0$ for all $b \in \mathcal{B}$, proving linear independence of $\psi(\mathcal{B})$.  

    Therefore, $\psi(\mathcal{B})$ is a basis for $((V_2,+_2),\cdot_2)$ over $(F_2,*_2)$.  

    The converse follows similarly by applying $(\psi^{-1}, \eta^{-1})$ instead of $(\psi, \eta)$.  
\end{proof}

\noindent The following lemma will be used in the proof of Definition-Lemma \ref{V^I} and Theorem \ref{mult}.

\begin{lemm}\label{homoplus}
    Let $V_1$ and  $V_2$ be $F$-spaces and $\phi$ be an $F$-linear map of near-vector spaces between $V_1$ and $V_2$. Then for all $u \in Q(V_{1})^{*}$, $+_{u} = +_{\phi(u)}$.
\end{lemm}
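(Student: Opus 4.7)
The plan is to compare the two operations by evaluating each on arbitrary $\alpha, \beta \in F$ and exploiting the uniqueness embedded in Definition \ref{+u}. The key leverage is that an $F$-linear map sends the defining identity $\alpha u + \beta u = (\alpha +_u \beta) u$ at $u \in Q(V_1)^*$ to the analogous identity at $\phi(u) \in V_2$, which, provided $\phi(u) \neq 0$, forces the two scalars to coincide.

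First, I would verify that $\phi(u) \in Q(V_2)$: given $\alpha, \beta \in F$, let $\gamma = \alpha +_u \beta$ so that $\alpha u + \beta u = \gamma u$ in $V_1$; applying $\phi$ and using that it is an additive homomorphism and $F$-linear, we obtain $\alpha \phi(u) + \beta \phi(u) = \gamma \phi(u)$, which is exactly the defining property of the quasi-kernel. Assuming $\phi(u) \neq 0$ (so that $\phi(u) \in Q(V_2)^{*}$ and $+_{\phi(u)}$ is defined per Definition \ref{+u}), the same element $\gamma$ witnesses both equalities
\[
\alpha \phi(u) + \beta \phi(u) = (\alpha +_u \beta)\, \phi(u) \quad\text{and}\quad \alpha \phi(u) + \beta \phi(u) = (\alpha +_{\phi(u)} \beta)\, \phi(u).
\]
Comparing, $(\alpha +_u \beta)\phi(u) = (\alpha +_{\phi(u)} \beta)\phi(u)$, and by the freeness of the scalar group action on the near-vector space $V_2$ together with $\phi(u) \neq 0$, we conclude $\alpha +_u \beta = \alpha +_{\phi(u)} \beta$. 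Since $\alpha, \beta$ were arbitrary, $+_u = +_{\phi(u)}$.

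The main subtlety, rather than any technical obstacle, is the tacit nondegeneracy hypothesis $\phi(u) \neq 0$ needed for $+_{\phi(u)}$ to be defined at all; in the two places where the lemma is later invoked (Definition-Lemma \ref{V^I} and Theorem \ref{mult}), the map $\phi$ will either be a canonical coordinate inclusion or injective on the relevant quasi-kernel element, so this assumption is automatically satisfied. The rest of the argument is purely the transfer of the defining identity across an $F$-linear map followed by an appeal to freeness, both of which are immediate from the axioms in Definition \ref{NVS2} and Definition \ref{homo}.
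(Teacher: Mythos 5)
Your proof is correct and follows essentially the same route as the paper's: apply $\phi$ to the identity $\alpha \cdot_1 u +_1 \beta \cdot_1 u = (\alpha +_u \beta)\cdot_1 u$ and use $F$-linearity to obtain $\alpha \cdot_2 \phi(u) +_2 \beta \cdot_2 \phi(u) = (\alpha +_u \beta)\cdot_2 \phi(u)$, then conclude by uniqueness of the scalar in Definition \ref{+u}. Your explicit flagging of the hypothesis $\phi(u) \neq 0$ (needed for $+_{\phi(u)}$ to be defined) and the appeal to freeness for the uniqueness step are points the paper leaves implicit, but they do not alter the argument.
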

\vspace{-0.5cm}
\begin{proof}
    Let $\alpha, \beta \in F$ and $u \in Q(V_1)^{*}$. Then, 
    \begin{align*}
        \alpha \cdot_{2}\phi(u)+_{2} \beta \cdot_{2}\phi(u) & = \phi (\alpha \cdot_{1}u+_{1}\beta \cdot_{1}u) = \phi((\alpha+_{u}\beta)\cdot_{1}u) = (\alpha +_{u}\beta)\cdot_{2}\phi(u). 
    \end{align*}
    Since we have proven the above sequence of equalities for all $\alpha, \beta \in F$, we have $+_u = +_{\phi(u)}$.
\end{proof}

\noindent The following definition will be needed to describe an infinite coproduct in the category of near-vector spaces (see Definition-Lemma \ref{V^I}).
\begin{defin}
Let $\{A_i\}_{i\in I}$ be a family of abelian groups,  and $(a_i)_{i\in I} \in \prod_{i\in I} A_i$. We say that the $a_i$'s are almost all zero if there are non-zero values for only a finite number of indices. We denote $(a_i)_{i\in I}\asymp \mathbf{0} $ to symbolize this property.
\end{defin}
\noindent In the following definition, we introduce the generalized Kronecker delta, which will be used to define the standard basis for a multiplicative near-vector space in the next section.
\begin{defin}
Let $I$ be a set. We define the generalized Kronecker delta as the symbol $\delta_{i,j}$, which is $0$ when $i\neq j$ and $1$ otherwise.
\end{defin}
\noindent In the following definition-lemma, we establish the existence of infinite coproducts in the category of near-vector spaces. While this result is likely well-known, we have not found an explicit proof in the literature. Therefore, for completeness, we provide a quick proof demonstrating that an infinite coproduct of near-vector spaces indeed forms a near-vector space.
\begin{definlem}
\label{V^I}
Let $I$ be an index set, and $\{((V_i,+_i), \cdot_i) \}_{i\in I}$ be a family of near-vector spaces over $F$.  
\begin{enumerate}
    \item We define the near-vector space denoted by $\bigoplus_{i \in I}V_{i}$, to be the set 
$$\bigoplus_{i \in I}V_{i}=\left\{ (v_i)_{i\in I} \in \prod_{i\in I} V_i \mid (v_i)_{i\in I} \asymp \mathbf{0} \right\},$$
endowed with the addition and scalar multiplication component-wise. For any $i \in I$, we define the $F$-linear map $\iota_i: V_{i} \rightarrow \bigoplus_{i \in I}V_{i}$ by sending $v_i$ to $(v_i\delta_{i,j})_{j \in I}$.  
\item $(\bigoplus_{i \in I} V_{i}, \{\iota_{i}:V_{i}\rightarrow \bigoplus_{i \in I}V_{i}\}_{i \in I})$ is an infinite coproduct for the family $\{((V_i,+_i), \cdot_i) \}_{i\in I}$ in the category of near-vector spaces.
\end{enumerate}
\end{definlem}
\begin{proof}
\begin{enumerate}
    \item 
We show that $Q(\bigoplus_{i \in I}V_{i})$ generates $\bigoplus_{i \in I}V_{i}$ as an additive group. By Lemma \ref{homoplus}, we know that $\bigcup_{i \in I}\iota_{i}(Q(V_{i})) \subseteq Q(\bigoplus_{i \in I}V_{i})$, since $\iota_i$ is an $F$-linear map. Let $(v_{i})_{i \in I} \in \bigoplus_{i \in I}V_{i}$. We have $$(v_{i})_{i \in I} = \sum_{i \in I}v_{i}(\delta_{j,i})_{i \in I} = \sum_{i \in I} \left( \sum_{j \in J} u_{j_{i}}\right) (\delta_{j,i})_{i \in I} = \sum_{i \in I}\sum_{j \in J}u_{j_i}(\delta_{j,i})_{i \in I}$$ 
for some $u_{j_{i}} \in Q(V_{i})$ for all $i,j \in I$, since each $V_i$ is generated by $Q(V_i)$.  Since 
$u_{j_i}(\delta_{j,i})_{i \in I} \in Q(\bigoplus_{i \in I}V_{i})$ for all $i,j \in I$, this shows that $Q(\bigoplus_{i \in I}V_{i})$ generates $\bigoplus_{i \in I}V_{i}$ as an additive group. From there, it is not hard to prove that $\bigoplus_{i \in I}V_{i}$ is a near-vector space.
\item This is clear.
\end{enumerate}
\end{proof}
\vspace{-0.5cm}
\noindent We recall the definition of regularity, a fundamental concept in the study of near-vector spaces.
\begin{defin}{\cite[Definition 4.7]{Andre}} \label{def4} 
Let \( u, v \in Q(V)^{*} \).
\begin{enumerate}
    \vspace{-0.5cm}  \item We say that \( u \) and \( v \) are {\sf compatible} if there exists \( \lambda \in F^{*} \) such that \( u + \lambda v \in Q(V) \).
    \item We say that \( V \) is {\sf regular} if any two vectors of \( Q(V)^{*} \) are compatible.
\end{enumerate}
\end{defin}
\vspace{-0.5cm}
\noindent Regular near-vector spaces behave most like traditional vector spaces. It is straightforward to see that if $Q(V) = V,$ then $V$ is regular. Andr\'{e} proved that any near-vector space can be decomposed into regular parts (see \cite[Satz 4.13]{Andre}). This serves as motivation for him to refer to regular near-vector spaces as the building blocks of his theory. 

\noindent We end this section with the definition of a regular decomposition family for near-vector spaces.
\begin{defin}\cite[Definition 2.6]{LSD2024}
\label{decompfamily} 
Let \( v \in V \), \(\{V_{i}\}_{i \in I}\) be a family of subspaces of \( V \) and \(\{v_{i}\}_{i \in I}\) be a family of elements in \( V_{i} \) for each \( i \in I \).
\begin{enumerate}
   \vspace{-0.5cm}   \item We say that \(\{V_{i}\}_{i \in I}\) is a {\sf regular decomposition family for \( V \)} if \( V = \bigoplus_{i \in I} V_{i} \) where \( V_{i} \) is a maximal regular subspace of \( V \), for all $i\in I$.
    \item We say that \(\{v_{i}\}_{i \in I}\) is the {\sf family of regular components of \( v \) along \(\{V_{i}\}_{i \in I}\)} if \( v = \sum_{i \in I} v_{i} \) and \( v_{i} \in V_{i} \) for all \( i \in I \), where \(\{V_{i}\}_{i \in I}\) is a regular decomposition family for \( V \).
\end{enumerate}
\end{defin}

\section{Multiplicative near-field structures}
We begin this section by introducing the definition of a (left) near-field that will serve as the foundation for this paper. 
\begin{defin}
    A {\sf (left) near-field} $F$ is a tuple $(F,+,\cdot, 0,1)$ where $(F,\cdot,1)$ is a monoid, $(F^{*},\cdot,1)$ is a group, $(F,+,0)$ is an abelian group and for all $\alpha,\beta,\gamma \in F$, we have $\alpha (\beta +\gamma) = \alpha \beta +\alpha \gamma$. We also denote a near-field by $(F, +, \cdot)$, or simply $F$ for brevity.
\end{defin} 
\noindent We also recall the definition of a right-distributive element in a near-field.
\begin{defin}\cite[Definition 3.2]{LSD2024}
    Let $(F, +, \cdot)$ be a near-field. An element \(\gamma \in F\) is {\sf right distributive} if, for every \(\alpha, \beta \in F\), \((\alpha + \beta)\gamma = \alpha \gamma + \beta \gamma\). We denote by \(F_{\fr{d}}\) the set of all right distributive elements of \(F\).
\end{defin}\vspace{-0.5cm}
\noindent Given a near-field $(F,+,\cdot)$, it is shown in \cite[Theorem-2.4-5]{DeBruyn} that $(F,+,\cdot)$ is a division ring and $F$ is a (right) vector space over $F_{\fr{d}}$.

\noindent In this section, unless stated otherwise, \( F \) denotes a (left) near-field. For clarity, throughout the remainder of this paper, we will use the term "near-field" to refer specifically to a "(left) near-field."  

\noindent The primary objective of this paper is to construct a broad class of computable near-vector spaces by leveraging multiplicative automorphisms. To lay the groundwork for this construction, we first introduce the concept of a multiplicative automorphism of a near-field.
\begin{defin} \label{multauto}  
A {\sf multiplicative automorphism $\sigma$ of the near-field $(F, +, \cdot)$} is defined as a monoid automorphism of $(F, \cdot)$. In other words, $\sigma$ satisfies the following properties:  
\begin{itemize} 
 \vspace{-0.5cm}  \item $\sigma(1) = 1$, and  
\item $\sigma(\alpha \beta) = \sigma(\alpha) \sigma(\beta)$ for all $\alpha, \beta \in F$.
\end{itemize}
 \vspace{-0.7cm} 
\end{defin}
\noindent For the remainder of this paper, we shall use the term ``multiplicative automorphism" in place of ``multiplicative near-field automorphism" or ``multiplicative automorphism of a near-field" unless otherwise stated.
\noindent It is important to note that any multiplicative automorphism $\sigma$ of a near-field $(F,+,\cdot)$ has the following properties: 
\begin{enumerate}
    \vspace{-0.5cm}  \item $\sigma(0) = 0$; 
    \item $  \sigma(-\alpha)= -\sigma(\alpha)$ for all $\alpha \in F$;
    \item $\sigma(\alpha^{-1}) = (\sigma(\alpha))^{-1}$ for all $\alpha \in F^{*}$.
\end{enumerate}  \vspace{-0.5cm} 
1. This follows from the equality $\sigma(0)\sigma(0)= \sigma(0)$ and the following facts: $\sigma(0)\neq 1$, $\sigma$ is a bijection and any non-zero element has an inverse in $F$. \\
2. This follows clearly when $\alpha=0$ and in characteristic $2$. When $\alpha\in F^*$ and $\operatorname{char}(F)\neq 2$, the result follows from the equality 
\(\sigma(\alpha)^2 - \sigma(-\alpha)^2 = 0,
\)
which is obtained from the fact that \(\sigma\) is a multiplicative morphism. Since $\sigma$ is a bijection, we know $\sigma (\alpha) \neq \sigma(-\alpha)$ which then implies that $\sigma(-\alpha) = -\sigma(\alpha)$. \\
3. This follows from the equality $\sigma(\alpha)\sigma(\alpha^{-1})=\sigma(\alpha^{-1})\sigma(\alpha) = 1$, which is obtained using the fact that $\sigma$ is a multiplicative automorphism and every non-zero element of $F$ has a multiplicative inverse. 
\noindent Next, given a near-field $(F, +, \cdot)$, we define an addition and a scalar multiplication associated with a multiplicative automorphism. Using these definitions, we construct a near-field induced by the multiplicative automorphism. The proofs of the statements in this definition-lemma are straightforward and can be verified by explicitly writing out the required equations; therefore, we omit them for brevity.
\begin{definlem}  
\label{induced}  
Let $\sigma$ and $\rho$ be multiplicative automorphisms of $F$.  
\begin{enumerate}  
     \vspace{-0.5cm}  \item We define the \textsf{addition associated with $\sigma$} as a binary operation $+_\sigma$ on $F$, for all $\alpha, \beta \in F$, as  
    \[
    \alpha +_\sigma \beta = \sigma^{-1}(\sigma(\alpha) + \sigma(\beta)).
    \]  
    The triple $(F, +_\sigma, 0)$ forms an abelian group.  
    If the near-field is denoted by $F_u$ for some $u \in Q(V)^*$, this addition is written as $+_{u, \sigma}$.     Additionally, for $n \in \mathbb{N}$, we define  
    \[
    {}^{\sigma \! \!}\sum_{k=1}^{n} \alpha_k = \alpha_1 +_\sigma \alpha_2 +_\sigma \cdots +_\sigma \alpha_n.
    \]
  
 \item We define the \textsf{near-field induced by $\sigma$}, denoted by $F_{\sigma}$, as the near-field $(F, +_{\sigma}, \cdot)$. 
 
  \item We define the \textsf{scalar multiplication induced by $\rho$} as the scalar multiplication $\cdot_\rho$ on $F$, for all $\alpha, \beta \in F$, as 
    \[
    \alpha \cdot_\rho \beta = \rho(\alpha) \cdot \beta.
    \]  
    This operation $\cdot_\rho$ defines a left monoid action of $F$ on itself.  
\end{enumerate}    
\end{definlem}
\noindent We prove the next lemma that will be used in the proof of Theorem \ref{infprod}
\begin{lemm}
\label{divisionring}
   Let $\sigma$ be a near-field multiplicative automorphism. Then:  
    \vspace{-0.5cm}
    \begin{enumerate}
        \item The structure $(\sigma(F_{\fr{d}}),+_{\sigma^{-1}},\cdot)$ is isomorphic to $(F_{\fr{d}},+, \cdot)$ as division rings via $\sigma^{-1}$.
        \item $((F,+_{\sigma^{-1}}), \cdot)$ is a vector space over $(\sigma(F_{\fr{d}}),+_{\sigma^{-1}},\cdot)$, which is isomorphic to $((F,+),\cdot)$ over $(F_{\fr{d}},+,\cdot)$ via $(\sigma^{-1},\sigma^{-1})$ as vector spaces.
    \end{enumerate}
\end{lemm}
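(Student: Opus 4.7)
The plan is to transport the known division-ring structure on $(F_{\fr{d}}, +, \cdot)$ and the $F_{\fr{d}}$-vector-space structure on $F$ along the multiplicative bijection $\sigma^{-1}$. The key identity, obtained by unpacking Definition-Lemma \ref{induced} with $\sigma^{-1}$ in place of $\sigma$, is
\[
\sigma^{-1}(\alpha +_{\sigma^{-1}} \beta) \;=\; \sigma^{-1}\bigl(\sigma(\sigma^{-1}(\alpha) + \sigma^{-1}(\beta))\bigr) \;=\; \sigma^{-1}(\alpha) + \sigma^{-1}(\beta).
\]
Combined with multiplicativity of $\sigma^{-1}$, this is what allows every structural property to be pulled back from $F_{\fr{d}}$ to $\sigma(F_{\fr{d}})$ and from $(F, +)$ to $(F, +_{\sigma^{-1}})$.

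For part (1), I would first verify closure: for $a, b \in F_{\fr{d}}$, one has $\sigma(a)\cdot \sigma(b) = \sigma(ab) \in \sigma(F_{\fr{d}})$ and $\sigma(a) +_{\sigma^{-1}} \sigma(b) = \sigma(a+b) \in \sigma(F_{\fr{d}})$, so $\sigma(F_{\fr{d}})$ is closed under both operations. The restriction $\sigma^{-1}\colon \sigma(F_{\fr{d}}) \to F_{\fr{d}}$ is then a bijection that is additive by the key identity, multiplicative because $\sigma$ is, and sends $1$ to $1$. Since $(F_{\fr{d}}, +, \cdot)$ is a division ring by the result of \cite{DeBruyn} recalled earlier, transporting structure via $\sigma^{-1}$ makes $(\sigma(F_{\fr{d}}), +_{\sigma^{-1}}, \cdot)$ a division ring isomorphic to it.

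For part (2), I would verify that the pair $(\sigma^{-1}, \sigma^{-1})$ satisfies the vector-space analogue of the homomorphism conditions of Definition \ref{homo}. The first $\sigma^{-1}$ is an additive bijection $(F, +_{\sigma^{-1}}) \to (F, +)$ by the key identity; the second is the division-ring isomorphism from part (1); and the scalar-compatibility condition $\sigma^{-1}(v \cdot s) = \sigma^{-1}(v)\cdot \sigma^{-1}(s)$ is again multiplicativity of $\sigma^{-1}$. Since $((F, +), \cdot)$ is a vector space over $(F_{\fr{d}}, +, \cdot)$ by \cite{DeBruyn}, transporting through $(\sigma^{-1}, \sigma^{-1})$ endows $((F, +_{\sigma^{-1}}), \cdot)$ with the claimed vector-space structure over $(\sigma(F_{\fr{d}}), +_{\sigma^{-1}}, \cdot)$ and exhibits the isomorphism. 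The only real obstacle is the bookkeeping: the subscript $\sigma^{-1}$ in $+_{\sigma^{-1}}$ means its defining formula involves $\sigma$ on the outside, inverting the direction one initially expects. Once that is kept straight, both parts reduce to direct verification.
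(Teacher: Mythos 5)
Your proposal is correct and follows essentially the same route as the paper: both hinge on the identity $\sigma^{-1}(\alpha +_{\sigma^{-1}} \beta) = \sigma^{-1}(\alpha) + \sigma^{-1}(\beta)$ together with multiplicativity of $\sigma^{-1}$, and then transport the division-ring structure on $F_{\fr{d}}$ and the vector-space structure of $F$ over $F_{\fr{d}}$ along the bijection. Your added verification of closure of $\sigma(F_{\fr{d}})$ and of the scalar-compatibility condition only spells out details the paper leaves implicit.
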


\begin{proof}
\begin{enumerate}
    \item Let $\alpha, \beta \in \sigma(F_{\fr{d}})$. Then, we have:
    \begin{align*}
        \sigma^{-1}(\alpha+_{\sigma^{-1}} \beta) &= \sigma^{-1}(\sigma(\sigma^{-1}(\alpha)+\sigma^{-1}(\beta))) \\
        &= \sigma^{-1}(\alpha) + \sigma^{-1}(\beta),
    \end{align*}
    and  
    \begin{align*}
        \sigma^{-1}(\alpha\beta) = \sigma^{-1}(\alpha) \cdot \sigma^{-1}(\beta),
    \end{align*}
    which shows that $\sigma^{-1}$ is a homomorphism. Since $\sigma$ is bijective by assumption, the result follows.

    \item The proof follows similarly to part (1).  
\end{enumerate}
\end{proof}

\noindent Given a near-vector space \( V \) over  \( F \), any \( u \in Q(V)^* \) defines an addition \( +_u \) (see \cite[Section 2]{Andre} or \cite[Definition 2.2-4]{DeBruyn}). For any \( \gamma \in F^* \), \( \gamma u \in Q(V)^* \) (see \cite[Hilfssatz 2.2(c)]{Andre} or \cite[Lemma 2.2-2(c)]{DeBruyn}), and the addition \( +_{\gamma u} \) is defined for all \( \alpha, \beta \in F \) as  
\[
\alpha +_{\gamma u} \beta = (\alpha \gamma +_{u} \beta \gamma) \gamma^{-1}
\]  (see \cite[Satz 2.5]{Andre} or \cite[Theorem 2.2-8]{DeBruyn}).

\noindent We can express this addition as an addition associated with a multiplicative automorphism using the following definition.
\begin{defin}
Let \( \gamma \in F^* \). The \textsf{multiplicative automorphism of \( F \) associated to \( \gamma \)}, denoted by \( \varphi_\gamma \), is defined as the map sending \( \alpha \) to \( \gamma^{-1} \alpha \gamma \). 
\end{defin}  \vspace{-0.5cm} 
\begin{rem}
If \( F \) is a field, then for every \( \gamma \in F \), the morphism \( \varphi_\gamma \) is the identity map.
\end{rem}\vspace{-0.5cm} 
\noindent We now obtain the following lemma:
\begin{lemm}
\label{varphi}
Given a near-vector space \( V \) over a scalar group \( F \), with \( u \in Q(V)^* \) and \( \gamma \in F^* \), we have 
$$+_{\gamma u}= +_{u,\varphi_{\gamma}}.$$
\end{lemm}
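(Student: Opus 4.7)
The plan is to prove the equality of the two binary operations $+_{\gamma u}$ and $+_{u, \varphi_\gamma}$ by expanding each one using the relevant definitions and then identifying them via the left distributivity of the near-field $F_u$.

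First, I would recall that by André's result (cited immediately before the lemma), for all $\alpha, \beta \in F$ we have
\[
\alpha +_{\gamma u} \beta = (\alpha\gamma +_u \beta\gamma)\gamma^{-1},
\]
and that by Definition-Lemma \ref{induced}(1),
\[
\alpha +_{u, \varphi_\gamma} \beta = \varphi_\gamma^{-1}(\varphi_\gamma(\alpha) +_u \varphi_\gamma(\beta)),
\]
where $\varphi_\gamma$ is a multiplicative automorphism of the near-field $F_u = (F, +_u, \cdot)$ whose inverse satisfies $\varphi_\gamma^{-1}(\delta) = \gamma \delta \gamma^{-1}$ for all $\delta \in F$.

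Then I would substitute the explicit formulas $\varphi_\gamma(\alpha) = \gamma^{-1}\alpha\gamma$ and $\varphi_\gamma^{-1}(\delta) = \gamma\delta\gamma^{-1}$ into the expression for $\alpha +_{u, \varphi_\gamma} \beta$, which yields
\[
\alpha +_{u, \varphi_\gamma} \beta = \gamma\bigl(\gamma^{-1}\alpha\gamma +_u \gamma^{-1}\beta\gamma\bigr)\gamma^{-1}.
\]
At this point the key tool is the left distributivity of the near-field $F_u$: for any $\delta, X, Y \in F$, one has $\delta(X +_u Y) = \delta X +_u \delta Y$. Applying this with $\delta = \gamma$, $X = \gamma^{-1}\alpha\gamma$ and $Y = \gamma^{-1}\beta\gamma$ simplifies the inner expression to $\alpha\gamma +_u \beta\gamma$, and hence
\[
\alpha +_{u, \varphi_\gamma} \beta = (\alpha\gamma +_u \beta\gamma)\gamma^{-1} = \alpha +_{\gamma u} \beta.
\]

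Since $\alpha, \beta \in F$ were arbitrary, this establishes $+_{\gamma u} = +_{u, \varphi_\gamma}$. There is no real obstacle here; the only point requiring attention is to remember that $+_u$ obeys left distributivity in $F_u$ (so that $\gamma$ factors through the $+_u$), and to keep track of the noncommutativity by placing $\gamma$ and $\gamma^{-1}$ on the correct sides throughout the computation.
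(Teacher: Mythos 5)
Your proposal is correct and follows essentially the same route as the paper: both expand $\alpha +_{\gamma u}\beta = (\alpha\gamma +_u \beta\gamma)\gamma^{-1}$ and $\alpha +_{u,\varphi_\gamma}\beta = \gamma(\gamma^{-1}\alpha\gamma +_u \gamma^{-1}\beta\gamma)\gamma^{-1}$ and identify them via the left distributivity of the near-field $(F,+_u,\cdot)$, with the only difference being that you run the computation from $+_{u,\varphi_\gamma}$ toward $+_{\gamma u}$ while the paper inserts $\gamma\gamma^{-1}$ and goes the other way. Your explicit attribution of the distributivity to $F_u$ (rather than loosely to $F$) is, if anything, slightly more careful than the paper's closing remark.
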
  \vspace{-0.5cm} 
\begin{proof} 
    Let \( \alpha, \beta \in F \). Then:
    \begin{align*}
        \alpha +_{\gamma u} \beta &= (\alpha \gamma +_u \beta \gamma)\gamma^{-1}  \\
        &= (\gamma \gamma^{-1})(\alpha \gamma +_u \beta \gamma)\gamma^{-1} \\
        &= \gamma (\gamma^{-1} \alpha \gamma +_u \gamma^{-1} \beta \gamma)\gamma^{-1} \\
        &= \varphi_{\gamma}^{-1}(\varphi_{\gamma}(\alpha) +_u \varphi_{\gamma}(\beta)) \\
        &= \alpha +_{u,\varphi_{\gamma}} \beta.
    \end{align*}
    This follows from the left distributive property of the near-field \( F \).
\end{proof}
\vspace{-0.5cm}
\noindent The following lemma characterizes the condition under which the additions associated with two multiplicative automorphisms of a near-field are equal:
\begin{lemm}
\cite[Lemma 1.6]{BM2024}
\label{nearfieldauto}
    Let \( \sigma \) and \( \sigma' \) be multiplicative automorphisms of \( F \). Then, \( +_{\sigma} = +_{\sigma'} \) if and only if \( \sigma \circ \sigma'^{-1} \) is a near-field automorphism.
\end{lemm}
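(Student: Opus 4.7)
The plan is to unfold the definition of $+_\sigma$, namely $\alpha +_\sigma \beta = \sigma^{-1}(\sigma(\alpha) + \sigma(\beta))$, and reduce the statement to an additivity condition on $\tau := \sigma \circ \sigma'^{-1}$. Note first that $\tau$ is automatically a multiplicative automorphism of $F$ (as a composition of multiplicative automorphisms), so ``$\tau$ is a near-field automorphism'' amounts to showing that $\tau$ is additive with respect to $+$.

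For the forward direction, I would assume $+_\sigma = +_{\sigma'}$ and, given $\alpha,\beta \in F$, set $\alpha' = \sigma'^{-1}(\alpha)$ and $\beta' = \sigma'^{-1}(\beta)$. Applying the hypothesis at $(\alpha',\beta')$ gives
\[
\sigma^{-1}\bigl(\sigma(\alpha') + \sigma(\beta')\bigr) \;=\; \sigma'^{-1}\bigl(\sigma'(\alpha') + \sigma'(\beta')\bigr) \;=\; \sigma'^{-1}(\alpha + \beta).
\]
Applying $\sigma$ to both sides and recognizing $\sigma \circ \sigma'^{-1} = \tau$ yields $\tau(\alpha)+\tau(\beta) = \tau(\alpha+\beta)$, which is what is needed.

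For the backward direction, I would assume $\tau$ preserves $+$ and compute directly, using $\sigma = \tau \circ \sigma'$:
\[
\alpha +_\sigma \beta = \sigma^{-1}\bigl(\sigma(\alpha)+\sigma(\beta)\bigr) = \sigma'^{-1}\circ\tau^{-1}\bigl(\tau(\sigma'(\alpha)) + \tau(\sigma'(\beta))\bigr) = \sigma'^{-1}\bigl(\sigma'(\alpha) + \sigma'(\beta)\bigr) = \alpha +_{\sigma'} \beta,
\]
where the middle equality uses the assumed additivity of $\tau$.

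There is no real obstacle: both implications are formal manipulations of the defining formula for $+_\sigma$. The only subtle point is remembering that $\tau$ is already multiplicative for free, so one truly only needs to verify (or use) additivity to obtain the near-field automorphism conclusion. I would present the argument as a short two-paragraph computation, one paragraph per direction.
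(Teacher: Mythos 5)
Your proof is correct: both directions follow exactly as you compute, and you rightly isolate the one non-trivial point, namely that $\tau=\sigma\circ\sigma'^{-1}$ is automatically a multiplicative automorphism, so being a near-field automorphism reduces to additivity with respect to $+$. The paper does not prove this lemma itself (it is quoted from \cite[Lemma 1.6]{BM2024}), so there is nothing internal to compare against; your self-contained unfolding of the definition $\alpha+_\sigma\beta=\sigma^{-1}(\sigma(\alpha)+\sigma(\beta))$ is the natural argument and fills that gap completely.
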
  \vspace{-0.5cm} 
\noindent In the following result, we define one-dimensional near-vector spaces, which serve as the building blocks of the near-vector spaces discussed in this paper.
\begin{definlem}
\label{multnvs}
Let $\sigma$ and $\rho$ be two multiplicative automorphisms. We define the {\sf elementary multiplicative near-vector space associated with $(\sigma , \rho)$ over $(F, \cdot)$}, denoted by $F^{\sigma,\rho}$, as the near-vector space $((F, +_{\sigma}), \cdot_{\rho})$. We have $Q(F^{\sigma,\rho}) = F$, and for all $\gamma \in F^{*}$, we have 
$$+_{\gamma} = +_{\sigma(\gamma), \sigma \circ \rho} = +_{\varphi_{\sigma(\gamma)} \circ \sigma \circ \rho} = +_{\sigma \circ \varphi_{\gamma} \circ \rho}.$$ In particular, when $\gamma = 1$, we have $+_1 = +_{\sigma \circ \rho}$. In this paper, there will be no ambiguity regarding which underlying near-vector space is being referenced when using \(+_\gamma\). However, if clarification were ever needed, a superscript could be added to explicitly indicate that we are working within the near-vector space \(F^{\sigma,\rho}\), writing \(+_\gamma^{\sigma , \rho}\) instead.
\end{definlem}
\begin{proof} 
We begin by proving that \( F^{\sigma,\rho} \) is a near-vector space. The left distributivity follows from the fact that \(\sigma\) and \(\rho\) are multiplicative automorphisms, and the left distributivity property for the near-field \((F, + , \cdot)\). Furthermore, \(0\), \(1\), and \(-1\) act as the trivial morphism, the identity morphism, and the negative morphism respectively, since \(\rho\) is a multiplicative automorphism, and we know that \(\sigma(0) = 0\), and \(\sigma(\pm 1) = \pm 1\) as shown earlier in this section.
\noindent To complete the proof that \( ((F, +_{\sigma}), \cdot_{\rho}) \) is a near-vector space, we now show that \( Q(F^{\sigma,\rho}) = F \). Let \( \gamma \in F^* \) and \( \alpha, \beta \in F \). We have the following:
  \begin{align*}
        \alpha \cdot_{\rho}\gamma +_{\sigma}\beta \cdot_{\rho}\gamma & = \sigma^{-1}(\sigma(\rho(\alpha)\gamma)+\sigma(\rho(\beta)\gamma)) \\
        & = \sigma^{-1}(\sigma(\rho(\alpha))\sigma(\gamma)+\sigma(\rho(\beta))\sigma(\gamma)) \\
        & = \sigma^{-1}(\sigma(\rho(\alpha))+_{\sigma(\gamma)} \sigma(\rho(\beta)))\sigma^{-1}(\sigma(\gamma)) \\
        & = \sigma^{-1}(\sigma(\rho(\alpha))+_{\sigma(\gamma)} \sigma(\rho(\beta)))\gamma \\
        & = \rho^{-1}(\sigma^{-1}(\sigma(\rho(\alpha))+_{\sigma(\gamma)} \sigma(\rho(\beta))))\cdot_{\rho}\gamma \\
        & = (\alpha +_{\sigma(\gamma), \sigma \circ \rho}\beta)\cdot_{\rho}\gamma.
    \end{align*}   
\noindent This shows that \( \gamma \in Q(F^{\sigma,\rho}) \), and that \( +_\gamma = +_{\sigma(\gamma), \sigma \circ \rho} \). This completes the proof that \( Q(F^{\sigma,\rho}) = F \).\\
On the other hand, we have that: 
   \begin{align*}
       (\alpha +_{\varphi_{\sigma(\gamma)}\circ\sigma \circ \rho}\beta)\cdot_{\rho}\gamma & = \sigma^{-1}(\sigma(\gamma)(\sigma(\gamma)^{-1}\sigma(\rho(\alpha))\sigma(\gamma)+ \sigma(\gamma)^{-1}\sigma(\rho(\beta))\sigma(\gamma))\sigma(\gamma)^{-1})\gamma \\
       & = \sigma^{-1}(\sigma(\rho(\alpha))+_{ \sigma(\gamma)} \sigma(\rho(\beta)))\gamma \\
       & = \rho^{-1}(\sigma^{-1}(\sigma(\rho(\alpha))+_{ \sigma(\gamma)} \sigma(\rho(\beta))))\cdot_{\rho}\gamma \\
       & = (\alpha+_{\sigma(\gamma),\sigma\circ\rho}\beta)\cdot_{\rho}\gamma.
    \end{align*}  
Thus, we have \( +_{\varphi_{\sigma(\gamma)}\circ \sigma \circ \rho} = +_{\sigma(\gamma),\sigma\circ \rho} \).\\
Finally, we have:
  \begin{align*}
     (\alpha +_{ \sigma\circ \varphi_{\gamma} \circ\rho }\beta)\cdot_{\rho}\gamma & =  \rho^{-1}(\varphi_{\gamma}^{-1}(\sigma^{-1}(\sigma(\varphi_{\gamma}(\rho(\alpha)))+ \sigma(\varphi_{\gamma}(\rho(\beta))))))\cdot_{\rho}\gamma \\
     & = \gamma \sigma^{-1}(\sigma(\gamma^{-1}\rho(\alpha)\gamma)+\sigma(\gamma^{-1}\rho(\beta)\gamma)) \\
      &=  \alpha \cdot_{\rho}\gamma +_{\sigma}\beta \cdot_{\rho}\gamma
.    \end{align*} 
Thus, we conclude that \( +_{  \sigma  \circ \varphi_{\gamma} \circ \rho } = +_{\gamma} \), as desired.
\end{proof}
\noindent Next, we introduce and investigate the structure of a multiplicative near-vector space, built from families of multiplicative automorphisms.
\begin{definlem}
\label{multautnvs}
Let \((F, +, \cdot)\) be a near-field, and let \(I\) be an index set. Consider two tuples of multiplicative automorphisms, \(\boldsymbol{\sigma} = (\sigma_i)_{i \in I}\) and \(\boldsymbol{\rho} = (\rho_i)_{i \in I}\).  
\begin{enumerate} 
     \vspace{-0.5cm} \item We define the {\sf multiplicative near-vector space associated with \((\boldsymbol{\sigma}, \boldsymbol{\rho})\)}, denoted by \(F^{\boldsymbol{\sigma}, \boldsymbol{\rho}}\), as the near-vector space  
    \[
    F^{\boldsymbol{\sigma}, \boldsymbol{\rho}} = \left\{ (\alpha_i)_{i\in I} \in \prod_{i\in I} F^{\sigma_i, \rho_i} \mid (\alpha_i)_{i\in I} \asymp \mathbf{0} \right\}.
    \]  
    where the near vector space structure is given componentwise by $F^{\sigma_i, \rho_i}$, for all $i \in I$. Additionally, for \( k \in \{1, \dots, n\} \), we define  
\[
 {}^{\boldsymbol{\sigma} \! \! }\sum_{k=1}^{n}(\alpha_{k_i})_{i \in I} = \left( {}^{\sigma_{i}\! \! }\sum_{k=1}^{n} \alpha_{k_i}\right)_{i \in I},
\]  
and we adopt the convention \( F^{0,0} = \{0\} \).
    \item The {\sf canonical basis of \(F^{\boldsymbol{\sigma}, \boldsymbol{\rho}}\)} is given by the set \(\mathcal{E} = \{ {\bf e}_j \mid j \in I\}\), where \({\bf e}_j = (\delta_{j,i})_{i\in I}\). For all $j \in I$ and $\gamma \in F^{*}$, we have
    $$+_{\gamma \cdot_{\boldsymbol{\rho}} {\bf e}_j}=+_{\sigma_{j} \circ \rho_{j}\circ \varphi_{\gamma}  }.$$
\end{enumerate}
\end{definlem}\vspace{-0.5cm}
\begin{proof} By Lemma \ref{V^I}, we know that \(F^{\boldsymbol{\sigma}, \boldsymbol{\rho}}\) is a near-vector space over \(F\).
We demonstrate that \( Q(F^{\boldsymbol{\sigma}, \boldsymbol{\rho}}) \) generates \( F^{\boldsymbol{\sigma}, \boldsymbol{\rho}} \).  
Let \( j\in I \), and take \( \alpha, \beta \in F \) and \( \gamma \in F^* \). Then, we have:   
\begin{align*}
    &\alpha \cdot_{\boldsymbol{\rho}}(\gamma \cdot_{\boldsymbol{\rho}} {\bf e}_{j}) +_{\boldsymbol{\sigma}} \beta \cdot_{\boldsymbol{\rho}}(\gamma \cdot_{\boldsymbol{\rho}} {\bf e}_{j})\\
    &= (\alpha \gamma) \cdot_{\boldsymbol{\rho}} {\bf e}_{j} +_{\boldsymbol{\sigma}} (\beta \gamma) \cdot_{\boldsymbol{\rho}} {\bf e}_{j}  \\
    &= (\sigma_{i}^{-1}(\sigma_{i}(\rho_{i}(\alpha \gamma)\delta_{j,i}) + \sigma_{i}(\rho_{i}(\beta\gamma)\delta_{j,i})))_{i \in I} \\
    & = (\rho_{i}^{-1}(\sigma_{i}^{-1}(\sigma_{i}(\rho_{i}(\alpha \gamma)) + \sigma_{i}(\rho_{i}(\beta\gamma))))\cdot_{\rho_{i}}\delta_{j,i})_{i \in I} \\
    &= ((\varphi_{\gamma}^{-1}(\rho_{i}^{-1}(\sigma_{i}^{-1}(\sigma_{i}(\rho_{i}(\varphi_{\gamma}(\alpha))\delta_{j,i}) + \sigma_{i}(\rho_{i}(\varphi_{\gamma}(\beta))\delta_{j,i})) ))\gamma)\cdot_{\rho_{i}}\delta_{j,i})_{i \in I} \\
    &= (((\alpha+_{\sigma_{i}\circ\rho_{i}\circ\varphi_{\gamma}}\beta)\gamma) \cdot_{\rho_{i}}\delta_{j,i})_{i \in I} \\
    & = (\alpha +_{\sigma_{j} \circ \rho_{j} \circ \varphi_{\gamma}} \beta) \cdot_{\boldsymbol{\rho}}(\gamma \cdot_{\boldsymbol{\rho}} {\bf e}_{j}).
\end{align*}  
This confirms that \( +_{\gamma \cdot_{\boldsymbol{\rho}}{\bf e}_{j}} = +_{\sigma_{j}  \circ \rho_{j}\circ \varphi_{\gamma}} \).  \\
\end{proof}
\vspace{-0.5cm}\noindent In the following lemma, we describe the quasi-kernel of $F^{\boldsymbol{\sigma}, \boldsymbol{\rho}}$.

\begin{theo}\label{nvss}
    Let $(F,+,\cdot)$ be a near-field, $I$ be an index set and $\boldsymbol{\sigma} = (\sigma_i )_{i\in I}$, $\boldsymbol{\rho}=( \rho_i )_{i\in I}$ be tuples of multiplicative automorphisms. We define an equivalence relation $\sim$ on $I$ as follows: for $i,j \in I$, $i \sim j$ if $\sigma_{i} \circ \rho_{i} \circ \rho_{j}^{-1}\circ \sigma_{j}^{-1}$ is a near-field automorphism. Let $\mathcal{S}$ be the full set of representatives with respect to the equivalence relation $\sim$ and for any $s \in \mathcal{S}$, we denote $I_{s}$ to be the equivalence class of $s$ for this equivalence relation. Let $V_{s} = \{ (\alpha_{i})_{i \in I}\in F^{\boldsymbol{\sigma}, \boldsymbol{\rho}} \mid \alpha_{i} = 0 \text{ for all } i \notin I_{s}\} $. Then, we have \begin{equation*}
        Q(F^{\boldsymbol{\sigma}, \boldsymbol{\rho}}) = \bigcup_{s\in \mathcal{S}}\left\{\lambda \cdot_{\boldsymbol{\rho}} (k_{i})_{i \in I} \mid \lambda \in F, (k_{i})_{i \in I} \in V_{s}, \text{ and } \sigma_i( k_{i}) \in F_{\fr{d}} \text{ for all } i \in I\right\}.
    \end{equation*}
    In particular, for all  $u\in   Q(F^{\boldsymbol{\sigma}, \boldsymbol{\rho}})^*$, there is $s\in \mathcal{S}$ and $\gamma\in F^*$ such that 
    $$+_u= +_{\sigma_s \circ \rho_s\circ \varphi_\gamma}.$$ 
\end{theo}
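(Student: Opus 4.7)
My plan is to prove the two inclusions of the set equality separately; the formula in the final ``in particular'' clause will drop out of the forward direction. Both directions translate equalities of additions into compositions of multiplicative automorphisms via Lemma \ref{nearfieldauto}, and make essential use of the componentwise description given in Definition-Lemma \ref{multnvs} and Definition-Lemma \ref{multautnvs}.

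For the inclusion $\supseteq$, I would take $u = \lambda \cdot_{\boldsymbol{\rho}} k$ with $\lambda \in F$, $k \in V_s$, and $\sigma_i(k_i) \in F_{\fr{d}}$ for every $i \in I$, and compute $\alpha \cdot_{\boldsymbol{\rho}} u +_{\boldsymbol{\sigma}} \beta \cdot_{\boldsymbol{\rho}} u$ componentwise, following the pattern of the proof of Definition-Lemma \ref{multautnvs}(2). For each $i \in I_s$, the right distributivity of $\sigma_i(k_i)$ allows factoring this element out on the right; combined with the commutation identity $\varphi_a \circ \tau = \tau \circ \varphi_{\tau^{-1}(a)}$ (for any multiplicative automorphism $\tau$) and the assumption $i \sim s$, one invokes Lemma \ref{nearfieldauto} to rewrite the expression as $(\alpha +_{\sigma_s \circ \rho_s \circ \varphi_\lambda} \beta) \cdot_{\boldsymbol{\rho}} u$. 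This simultaneously establishes $u \in Q(V)$ and $+_u = +_{\sigma_s \circ \rho_s \circ \varphi_\lambda}$, which will supply the final formula.

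For the inclusion $\subseteq$, I would take $u \in Q(V)^*$ with finite support $J = \{i \in I \mid u_i \neq 0\}$, fix any $i_0 \in J$, set $\lambda = \rho_{i_0}^{-1}(u_{i_0})$, and define $k_i = \rho_i(\lambda)^{-1} u_i$ for $i \in J$ and $k_i = 0$ otherwise, so that $u = \lambda \cdot_{\boldsymbol{\rho}} k$ and $k_{i_0} = 1$. Let $s$ be the representative of the class of $i_0$ in $\mathcal{S}$; the two remaining goals are to show $J \subseteq I_s$ (so $k \in V_s$) and $\sigma_i(k_i) \in F_{\fr{d}}$ for every $i$. The quasi-kernel condition, applied componentwise with the formula from Definition-Lemma \ref{multnvs}, forces $+_{\sigma_i \circ \varphi_{u_i} \circ \rho_i} = +_{\sigma_{i_0} \circ \varphi_{u_{i_0}} \circ \rho_{i_0}}$ for every $i \in J$; Lemma \ref{nearfieldauto} then says that $(\sigma_i \circ \varphi_{u_i} \circ \rho_i) \circ (\sigma_{i_0} \circ \varphi_{u_{i_0}} \circ \rho_{i_0})^{-1}$ is a near-field automorphism. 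Using $\varphi_a \circ \varphi_b = \varphi_{ba}$ and $\rho \circ \varphi_\gamma = \varphi_{\rho(\gamma)} \circ \rho$, this composite simplifies, after the cancellation induced by the choice of $\lambda$, to $\varphi_{\sigma_i(k_i)} \circ (\sigma_i \circ \rho_i \circ \rho_{i_0}^{-1} \circ \sigma_{i_0}^{-1})$.

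The crux of the argument is then to extract from this single near-field automorphism that each of the two factors is itself a near-field automorphism: the first forces $\sigma_i(k_i) \in F_{\fr{d}}$ (since $\varphi_c$ is additive precisely when $c$ is right distributive), and the second being a near-field automorphism is exactly the condition $i \sim i_0$, so that $i \in I_s$ and hence $J \subseteq I_s$. This separation step, which exploits the interaction between multiplicative automorphisms and the distributive subset $F_{\fr{d}}$ inside the near-field, is the main obstacle I expect to face. Granted it, applying the forward computation to the decomposition $u = \lambda \cdot_{\boldsymbol{\rho}} k$ just exhibited yields $+_u = +_{\sigma_s \circ \rho_s \circ \varphi_\lambda}$ with $\gamma = \lambda$, completing the proof.
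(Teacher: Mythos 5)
Your forward inclusion and the derivation of the ``in particular'' formula are essentially correct and close to the paper's own computation: the paper first shows $(k_i)_{i\in I}\in Q(F^{\boldsymbol{\sigma},\boldsymbol{\rho}})$ and then quotes closure of the quasi-kernel under scalar multiplication, whereas you absorb $\lambda$ into a single componentwise computation; the needed identity $+_{\sigma_i\circ\rho_i\circ\varphi_\lambda}=+_{\sigma_s\circ\rho_s\circ\varphi_\lambda}$ for $i\sim s$ does follow from Lemma \ref{nearfieldauto}, since the $\varphi_\lambda$'s cancel in the relevant composite. Likewise, your normalization in the reverse inclusion ($\lambda=\rho_{i_0}^{-1}(u_{i_0})$, $k_{i_0}=1$) is exactly the paper's, and your reformulation of the quasi-kernel hypothesis as ``$\varphi_{\sigma_i(k_i)}\circ\sigma_i\circ\rho_i\circ\rho_{i_0}^{-1}\circ\sigma_{i_0}^{-1}$ is a near-field automorphism for every $i$ in the support'' is a correct simplification.

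The step you defer (``granted it'') is, however, a genuine gap and not a routine verification: it is false in general that if a composite $\varphi_c\circ\psi$ of multiplicative automorphisms is additive, then $c\in F_{\fr{d}}$ and $\psi$ is additive. Indeed, for any $c\notin F_{\fr{d}}$ the map $\varphi_c$ is not additive, yet $\varphi_{c^{-1}}\circ\varphi_c=\operatorname{Id}$ is a near-field automorphism, so a product of two non-additive multiplicative automorphisms can be additive. Worse, this failure occurs in exactly the configuration your argument produces: take $\rho_{i_0}=\rho_i=\sigma_{i_0}=\operatorname{Id}$ and $\sigma_i=\varphi_c$ with $c\notin F_{\fr{d}}$, and let $u$ have $u_{i_0}=1$, $u_i=c^{-1}$ and all other components $0$; one checks directly (using only left distributivity) that $u$ satisfies the quasi-kernel condition with common sum the original $+$, your composite factors precisely as $\varphi_{c^{-1}}\circ\varphi_c$, yet $\sigma_i(k_i)=c^{-1}\notin F_{\fr{d}}$ and $i\not\sim i_0$. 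So the separation cannot be obtained by formal manipulation of Lemma \ref{nearfieldauto} and the commutation identities alone; it is automatic only when the inner automorphisms involved are additive (e.g.\ when $F$ is a division ring, so $F_{\fr{d}}=F$, and in particular over fields, where every $\varphi_\gamma$ is the identity), and otherwise it requires genuine structural input relating the $\sigma_i,\rho_i$ and $F_{\fr{d}}$. Note that this is also the point where the paper's own proof is briefest (it asserts $+_{\sigma_i\circ\rho_i}=+_{\sigma_s\circ\rho_s}$ for all $i$ with $k_i\neq 0$ before the distributivity of $\sigma_i(k_i)$ is in hand), so you should treat this separation as the real mathematical content of the reverse inclusion; as written, your proposal leaves it unproved.
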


\begin{proof}  
By Lemma \ref{nearfieldauto}, we recall that $\sigma_{i}\circ \rho_{i} \circ \rho_{s}^{-1} \circ \sigma_{s}^{-1}$ being a near-field automorphism is equivalent to $+_{\sigma_{i} \circ \rho_{i}} = +_{\sigma_{s}\circ \rho_{s}}$, for all $i\in I$ and $s\in \mathcal{S}$. Therefore, for all $s\in \mathcal{S}$, and $i\in I_s$ if and only if $+_{\sigma_{i} \circ \rho_{i}} = +_{\sigma_{s}\circ \rho_{s}}$. 
We set 
$$A:=\bigcup_{s\in \mathcal{S}}\left\{\lambda \cdot_{\boldsymbol{\rho}} (k_{i})_{i \in I} \mid \lambda \in F, (k_{i})_{i \in I} \in V_{s}, \text{ and } \sigma_i( k_{i}) \in F_{\fr{d}}, \text{ for all } i \in I\right\}.$$
   Let $\alpha, \beta, \lambda \in F$, $(k_{i})_{i \in I} \in V_{s}$ for some $s \in \mathcal{S}$, so that $\lambda \cdot_{\boldsymbol{\rho}}(k_{i})_{i \in I} \in A$.  Then, we have \begin{align*}
       \alpha \cdot_{\boldsymbol{\rho}}(k_{i})_{i \in I}+_{\boldsymbol{\sigma}} \beta \cdot_{\boldsymbol{\rho}}(k_{i})_{i \in I} & = (\sigma_{i}^{-1}(\sigma_{i}(\rho_{i}(\alpha)k_{i})+\sigma_{i}(\rho_{i}(\beta)k_{i})))_{i \in I} \\
       & = (\sigma_{i}^{-1}((\sigma_{i}(\rho_{i}(\alpha))+\sigma_{i}(\rho_{i}(\beta)))\sigma_{i}(k_{i})))_{i \in I} \\
       & = (\sigma_{i}^{-1}(\sigma_{i}(\rho_{i}(\alpha))+\sigma_{i}(\rho_{i}(\beta)))k_{i})_{i \in I}\\
       & = ((\alpha+_{\sigma_{s}\circ \rho_{s}}\beta)\cdot_{\rho_{i}}k_{i})_{i \in I} \\
       & = (\alpha+_{\sigma_{s}\circ \rho_{s}}\beta)\cdot_{\boldsymbol{\rho}}(k_{i})_{i \in I}
\end{align*}
and so $(k_{i})_{i \in I} \in Q(F^{\boldsymbol{\sigma}, \boldsymbol{\rho}})$. Since $Q(F^{\boldsymbol{\sigma}, \boldsymbol{\rho}})$ is closed under scalar multiplication (see \cite[Hilfssatz 2.2(c)]{Andre} or \cite[Lemma 2.2-2(c)]{DeBruyn}), it follows that $\lambda \cdot_{\boldsymbol{\rho}}(k_{i})_{i \in I} \in Q(F^{\boldsymbol{\sigma}, \boldsymbol{\rho}})$ for any $\lambda \in F$. This concludes the proof of the inclusion $A \subseteq Q(F^{\boldsymbol{\sigma}, \boldsymbol{\rho}})$.\\
For the reverse inclusion, 
since clearly, $\mathbf{0}\in A$, let $(\alpha_{i})_{i \in I} \in Q(F^{\boldsymbol{\sigma}, \boldsymbol{\rho}})$ such that $(\alpha_{i})_{i\in I} \neq \mathbf{0}$. Then, there is $s \in \mathcal{S}$ and $i_{0} \in I_{s}$ such that $\alpha_{i_{0}} \neq 0$. Let $k_{i} = \rho_{i_{0}}^{-1}(\alpha_{i_{0}}^{-1})\cdot_{\rho_{i}}\alpha_{i}$ for all $i \in I$. We now show that for each $k_{i}$ where $i \notin I_{s}$, we have $k_{i}=0$. We have $$(k_{i})_{i \in I} = (\rho_{i_{0}}^{-1}(\alpha_{i_{0}}^{-1})\cdot_{\rho_{i}}\alpha_{i})_{i \in I} = \rho_{i_{0}}^{-1}(\alpha_{i_{0}}^{-1})\cdot_{\boldsymbol{\rho}}(\alpha_{i})_{i \in I}.$$ Since $Q(F^{\boldsymbol{\sigma}, \boldsymbol{\rho}})$ is closed under scalar multiplication (see \cite[Hilfssatz 2.2(c)]{Andre} or \cite[Lemma 2.2-2(c)]{DeBruyn}), we have $(k_{i})_{i \in I} \in Q(F^{\boldsymbol{\sigma}, \boldsymbol{\rho}})$. 
Therefore, given $i \in I$ and $\alpha,\beta \in F$, there is a unique $\gamma \in F$ such that
\begin{align*}
    \alpha \cdot_{\rho_{i}}k_{i}+_{\sigma_{i}}\beta \cdot_{\rho_{i}}k_{i} = \gamma \cdot_{\rho_{i}}k_{i}.
\end{align*}
In particular, $\alpha \cdot_{\rho_{i_{0}}}k_{i_{0}}+_{\sigma_{i_{0}}}\beta \cdot_{\rho_{i_{0}}}k_{i_{0}} = \gamma \cdot_{\rho_{i_{0}}}k_{i_{0}}.$ However, $$k_{i_{0}} = \rho_{i_0}(\rho_{i_{0}}^{-1}(\alpha_{i_{0}}^{-1}))\alpha_{i_{0}} = \alpha_{i_{0}}^{-1}\alpha_{i_{0}} = 1,$$ which means that $\gamma = \alpha +_{\sigma_{i_{0}}\circ \rho_{i_{0}}}\beta= \alpha +_{\sigma_{s}\circ \rho_{s}}\beta$. In turn, we have 
\begin{equation} \label{eqrho}
\alpha \cdot_{\rho_{i}}k_{i}+_{\sigma_{i}}\beta \cdot_{\rho_{i}}k_{i} = (\alpha +_{\sigma_{s}\circ \rho_{s}}\beta) \cdot_{\rho_{i}}k_{i}.
\end{equation}
Therefore, since for any $k_i\neq 0$ we have $k_i \in Q(F^{\sigma_i, \rho_i})$, then for any $k_i \in F^*$, we have  $+_{\sigma_i \circ \rho_i}=+_{\sigma_s \circ \rho_s}$. This implies for all $i\notin I_s$, $k_i=0$ by definition of $\sim$. 
Moreover, Equation (\ref{eqrho}) is equivalent to the equation
$$ 
\sigma_{i}(\rho_{i}(\alpha))\sigma_{i}(k_{i})+\sigma_{i}(\rho_{i}(\beta))\sigma_{i}(k_{i}) = (\sigma_{i}(\rho_{i}(\alpha))+\sigma_{i}(\rho_{i}(\beta)))\sigma_{i}(k_{i}).
$$
Since $\sigma_i \circ \rho_{i}$ is an automorphism, this shows that $\sigma_i(k_i)\in F_{\fr{d}}$, for all $i\in I_s$ and $(\alpha_{i})_{i \in I} \in A$.
This concludes the proof of the reverse inclusion. Furthermore, let $u\in   Q(F^{\boldsymbol{\sigma}, \boldsymbol{\rho}})^*$. From the above we know that there exists $s\in \mathcal{S}$ and $\gamma\in F^*$ such that $u=\gamma \cdot_{\boldsymbol{\rho}} (k_{i})_{i \in I}$ and $(k_{i})_{i \in I} \in V_{s}$, $\sigma_i( k_{i}) \in F_{\fr{d}}$, for all $i \in I$. Then, we conclude that $+_{u}=+_{\sigma_{s}\circ \rho_{s}\circ \varphi_{\gamma}}$ by Lemma \ref{multautnvs}. 
\end{proof}

\begin{defintheo}
\label{pluses}
    Let $(F,+,\cdot)$ be a near-field, $I$ be an index set and $\boldsymbol{\sigma} = (\sigma_i)_{i\in I}$, $\boldsymbol{\rho}=( \rho_i )_{i\in I}$ be tuples of multiplicative automorphisms. We define an equivalence relation $i \overset{..}{\sim} j$ on $I$ as follows: $i \overset{..}{\sim}j$, if there is $\gamma \in F^*$, such that $ \sigma_i  \circ \rho_i \circ \varphi_\gamma \circ (\sigma_{j} \circ \rho_{j})^{-1} $ is a near-field automorphism. Let $\mathcal{T}$ be the full set of representatives with respect to the equivalence relation $\overset{..}{\sim}$ and for any $t \in \mathcal{T}$, we denote $I_{\overset{..}{t}}$ to be the equivalence class of $t$ for this equivalence relation. Let $\boldsymbol{\sigma}_t = ( \sigma_i)_{i \in I_t}$, and $\boldsymbol{\rho}_t=( \rho_i)_{i \in I_t}$. Then $\{\Psi_t(F^{\boldsymbol{\sigma}_{t},\boldsymbol{\rho}_{t}})\}_{t \in \mathcal{T}}$ is a regular decomposition family for $F^{\boldsymbol{\sigma},\boldsymbol{\rho}}$ where $\Psi_t:F^{\boldsymbol{\sigma}_{t},\boldsymbol{\rho}_{t}} \rightarrow F^{\boldsymbol{\sigma},\boldsymbol{\rho}} $ is the map sending $(\alpha_i)_{i\in I_{\overset{..}{t}}}$ to $(\beta_i)_{i \in I}$ where $\beta_i = \alpha_i$, for all $i \in I_t$ and $0$ otherwise. 
\end{defintheo}

\begin{proof} 
For any $\gamma \in F^*$ and $i,j \in I$, by Lemma \ref{multnvs} and \ref{multautnvs}, we know that $+_{\lambda \cdot_{\boldsymbol{\rho}}{\bf e}_{i}} = +_{{\bf e}_{t}}$ is equivalent to $+_{\sigma_{i}\circ \rho_{i}\circ \varphi_{\gamma}}  = +_{\sigma_{t}\circ \rho_{t}}$. This in turn is equivalent to $\sigma_{i} \circ \rho_{i}\circ \varphi_{\gamma} \circ (\sigma_{t}\circ \rho_{t})^{-1}$ being a near-field automorphism, by Lemma \ref{nearfieldauto}.
By the proof of \cite[Theorem 2.4-17]{DeBruyn}, we know that the regular decomposition of $F^{\boldsymbol{\sigma},\boldsymbol{\rho}} $ is $\{ V_t\}_{t\in \mathcal{T}}$ where for all $t\in \mathcal{T}$, $V_t$ is generated by $e_i$ such that $i\in I_{\overset{..}{t}}$. That is, $V_t=\Psi_t(F^{\boldsymbol{\sigma}_{t},\boldsymbol{\rho}_{t}}) $, for all $t\in \mathcal{T}$. This concludes the proof.
\end{proof}

\begin{lemm} 
\label{ismorphic}
   Let $(F,+,\cdot)$ be a near-field, $I$ be an index set and $\boldsymbol{\sigma} = ( \sigma_i )_{i\in I}$, $\boldsymbol{\rho}=( \rho_i )_{i\in I}$ be tuples of multiplicative automorphisms. Then, 
\begin{enumerate}
    \item $F^{\boldsymbol{\sigma},\boldsymbol{\rho}} \simeq F^{\boldsymbol{\theta},\boldsymbol{\operatorname{Id}}}$,
    \item $F^{\boldsymbol{\sigma},\boldsymbol{\rho}} \simeq F^{\boldsymbol{\operatorname{Id}},\boldsymbol{\theta}}$,
    \end{enumerate}
   where $\boldsymbol{\operatorname{Id}} = ( \operatorname{Id})_{i\in I} $ and $\boldsymbol{\theta}= (  \sigma_i \circ \rho_i)_{i\in I} $.
\end{lemm}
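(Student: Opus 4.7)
The plan is to exhibit explicit componentwise $F$-linear isomorphisms in each case, exploiting the fact that both the addition $+_{\boldsymbol{\sigma}}$ and the scalar multiplication $\cdot_{\boldsymbol{\rho}}$ on $F^{\boldsymbol{\sigma},\boldsymbol{\rho}}$ are defined coordinatewise from the elementary near-vector spaces $F^{\sigma_i,\rho_i}$. I would first verify the statement in each single coordinate and then assemble the coordinate maps into a map on the coproduct, which automatically respects the ``almost all zero'' condition since each coordinate map sends $0$ to $0$.

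For part (1), I would define $\Psi_1 : F^{\boldsymbol{\sigma},\boldsymbol{\rho}} \to F^{\boldsymbol{\theta},\boldsymbol{\operatorname{Id}}}$ by $\Psi_1((\alpha_i)_{i\in I}) = (\rho_i^{-1}(\alpha_i))_{i\in I}$, which is a bijection because each $\rho_i$ is a multiplicative automorphism. I then aim to check that $(\Psi_1, \operatorname{Id})$ is an $F$-isomorphism. The scalar compatibility follows coordinatewise from
\[
\rho_i^{-1}(\rho_i(\alpha) x_i) = \alpha \cdot \rho_i^{-1}(x_i),
\]
since $\rho_i^{-1}$ is multiplicative. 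For the additive compatibility, the identity $\theta_i = \sigma_i \circ \rho_i$ gives $\theta_i^{-1} = \rho_i^{-1} \circ \sigma_i^{-1}$, and so
\[
\rho_i^{-1}\!\bigl(\sigma_i^{-1}(\sigma_i(\alpha) + \sigma_i(\beta))\bigr) = \theta_i^{-1}\!\bigl(\theta_i(\rho_i^{-1}(\alpha)) + \theta_i(\rho_i^{-1}(\beta))\bigr),
\]
which says exactly that $\rho_i^{-1}$ intertwines $+_{\sigma_i}$ with $+_{\theta_i}$ (where the latter is the addition of $F^{\theta_i,\operatorname{Id}}$).

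For part (2), I would define $\Psi_2 : F^{\boldsymbol{\sigma},\boldsymbol{\rho}} \to F^{\boldsymbol{\operatorname{Id}},\boldsymbol{\theta}}$ by $\Psi_2((\alpha_i)_{i\in I}) = (\sigma_i(\alpha_i))_{i\in I}$. Bijectivity is again immediate, and in each coordinate one has
\[
\sigma_i(\alpha +_{\sigma_i} \beta) = \sigma_i(\alpha) + \sigma_i(\beta),
\]
which is the ordinary addition in $F^{\operatorname{Id},\theta_i}$, together with
\[
\sigma_i(\alpha \cdot_{\rho_i} x_i) = \sigma_i(\rho_i(\alpha)) \sigma_i(x_i) = \theta_i(\alpha)\, \sigma_i(x_i) = \alpha \cdot_{\theta_i} \sigma_i(x_i),
\]
so $(\Psi_2, \operatorname{Id})$ is an $F$-isomorphism as well.

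There is no real obstacle beyond guessing the right coordinate maps—$\rho_i^{-1}$ in part (1) and $\sigma_i$ in part (2). Once these are written down, both verifications reduce to unwinding the definitions of $+_\sigma$ and $\cdot_\rho$ from Definition-Lemma~\ref{induced}, and the whole argument is short.
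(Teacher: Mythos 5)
Your proof is correct and takes essentially the same route as the paper: your coordinatewise map $(\alpha_i)_{i\in I}\mapsto(\rho_i^{-1}(\alpha_i))_{i\in I}$ is exactly the paper's isomorphism defined by ${\bf e}_i\mapsto{\bf e}_i$ extended linearly, and your direct map $\Psi_2=(\sigma_i)_{i\in I}$ coincides with the composite the paper obtains by applying part (1) with $\boldsymbol{\sigma}=\boldsymbol{\operatorname{Id}}$ and $\boldsymbol{\rho}=\boldsymbol{\theta}$. The verifications of additive and scalar compatibility are the same computations, so there is no gap.
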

\begin{proof}
\begin{enumerate} 
\item We define the map $\phi: F^{\boldsymbol{\sigma},\boldsymbol{\rho}} \rightarrow F^{\boldsymbol{\theta},\boldsymbol{\operatorname{Id}}}$ by sending
${\bf e}_i$ to ${\bf e}_i$.
Extending this map by linearity, we have 
$\phi ( {}^{\boldsymbol{\sigma} \! \!}\sum_{i\in I} \alpha_i \cdot_{\boldsymbol{\rho}} {\bf e}_i)= {}^{\boldsymbol{\theta} \! \!}\sum_{i\in I}  \alpha_i \cdot {\bf e}_i$, where $\alpha_{i} \in F$ for all $i \in I$. 
We show that the map $\phi$ is an isomorphism. By construction, $\phi$ is a bijection. Let $\alpha \in F$ and $\alpha_i, \beta_i\in F$, for all $i \in I$. By Lemma \ref{multautnvs}, we have 
\begin{align*}
    \phi \left({}^{\boldsymbol{\sigma} \! \!}\sum_{i \in I} \alpha_{i} \cdot_{\boldsymbol{\rho}}{\bf e}_{i}  +_{\boldsymbol{\sigma}}  {}^{\boldsymbol{\sigma}\! \!}\sum_{i \in I} \beta_{i} \cdot_{\boldsymbol{\rho}}{\bf e}_{i}\right) & = \phi \left({}^{\boldsymbol{\sigma}\! \! }\sum_{i \in I}(\alpha_{i}+_{\sigma_{i}\circ \rho_{i}}\beta_{i})\cdot_{\boldsymbol{\rho}} {\bf e}_{i} \right) \\
    & = {}^{\boldsymbol{\theta} \! \!}\sum_{i \in I}(\alpha_{i} +_{\sigma_{i}\circ \rho_{i}}\beta_{i})\cdot{\bf e}_{i} \\
    & ={}^{\boldsymbol{\theta} \! \!}\sum_{i \in I}\alpha_{i}\cdot {\bf e}_{i} +_{\boldsymbol{\theta}}{}^{\boldsymbol{\theta} \! \!}\sum_{i\in I}\beta_{i}\cdot{\bf e}_{i} \\
    & = \phi\left({}^{\boldsymbol{\sigma}\! \!}\sum_{i\in I}\alpha_{i}\cdot_{\boldsymbol{\rho}}{\bf e}_{i} \right) +_{\boldsymbol{\theta}} \phi\left({}^{\boldsymbol{\sigma}\! \!}\sum_{i\in I}\beta_{i}\cdot_{\boldsymbol{\rho}}{\bf e}_{i} \right)
\end{align*}
and 
\begin{align*}
    \phi\left(\alpha \cdot_{\boldsymbol{\rho}} {}^{\boldsymbol{\sigma}\! \!}\sum_{i\in I}\alpha_{i}\cdot_{\boldsymbol{\rho}}{\bf e}_{i} \right) &= \phi\left( {}^{\boldsymbol{\sigma}\! \!}\sum_{i\in I}(\alpha\alpha_{i})\cdot_{\boldsymbol{\rho}}{\bf e}_{i} \right) \\
    =& {}^{\boldsymbol{\theta} \! \!}\sum_{i \in I}(\alpha\alpha_{i})\cdot{\bf e}_{i} =\alpha\cdot{}^{\boldsymbol{\theta} \! \!}\sum_{i \in I}\alpha_{i}\cdot{\bf e}_{i} = \alpha\cdot \phi \left({}^{\boldsymbol{\sigma}\! \!}\sum_{i\in I}\alpha_{i}\cdot_{\boldsymbol{\rho}}{\bf e}_{i} \right)
\end{align*}
which shows that $\phi$ is a homomorphism, and this completes the proof.
\item This is a consequence of 1. applying to $\boldsymbol{\sigma}=\boldsymbol{\operatorname{Id}}$ and $\boldsymbol{\rho}=\boldsymbol{\theta}$.
\end{enumerate}
\end{proof}

\begin{defin}
\label{multNVS}
     Let $(F,+,\cdot)$ be a near-field. We say that a near-vector space $V$ over $F$ is {\sf multiplicative } if it is $F$-isomorphic as near-vector spaces to $(F^{\boldsymbol{\sigma},\boldsymbol{\rho}},F)$ for some tuples of multiplicative automorphisms $\boldsymbol{\sigma} = (\sigma_i )_{i\in I}$ and $\boldsymbol{\rho}=( \rho_i )_{i\in I}$. 
    
\end{defin}
\noindent In the following lemma, we explore equivalent conditions that make a near-vector space a multiplicative near-vector space.

\begin{theo} 
\label{mult}
Let $(F,+,\cdot)$ be a near-field $V$ be a near-vector space over $F$. Then, the following assertions are equivalent:
\begin{enumerate} 
\item $(V,F)$ is a multiplicative near-vector space;
\item for all $u \in Q(V)^*$, $+_u=+_\sigma$ for some $\sigma$ multiplicative automorphism of $F$.
\item $V$ is $F$-isomorphic to $F^{\boldsymbol{\theta},\boldsymbol{\operatorname{Id}}}$ as near-vector spaces where $\boldsymbol{\theta}$ is a tuple of multiplicative automorphisms.
\item $V$ is $F$-isomorphic to $F^{\boldsymbol{\operatorname{Id}},\boldsymbol{\theta}}$ as near-vector spaces where $\boldsymbol{\theta}$ is a tuple of multiplicative automorphisms.
\end{enumerate}
\end{theo}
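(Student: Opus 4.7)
The plan is to prove the cycle $(3) \Rightarrow (1) \Rightarrow (2) \Rightarrow (3)$, and then obtain $(3) \Leftrightarrow (4)$ directly from Lemma \ref{ismorphic}. The implication $(3) \Rightarrow (1)$ is immediate from Definition \ref{multNVS}, taking $\boldsymbol{\sigma}=\boldsymbol{\theta}$ and $\boldsymbol{\rho}=\boldsymbol{\operatorname{Id}}$. The equivalence of (3) and (4) is handled by applying parts 1 and 2 of Lemma \ref{ismorphic}, using that the composition of multiplicative automorphisms is a multiplicative automorphism.

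For $(1) \Rightarrow (2)$, I would assume an $F$-isomorphism $\psi: V \to F^{\boldsymbol{\sigma},\boldsymbol{\rho}}$ exists. Since $\psi$ is an $F$-linear bijection of near-vector spaces, it maps $Q(V)^*$ into $Q(F^{\boldsymbol{\sigma},\boldsymbol{\rho}})^*$. Given $u \in Q(V)^*$, Lemma \ref{homoplus} yields $+_u = +_{\psi(u)}$, and Theorem \ref{nvss} then gives $+_{\psi(u)} = +_{\sigma_s \circ \rho_s \circ \varphi_\gamma}$ for some $s \in \mathcal{S}$ and $\gamma \in F^*$. Since $\varphi_\gamma$, $\sigma_s$, and $\rho_s$ are all multiplicative automorphisms, so is their composition, which establishes (2).

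The main obstacle is $(2) \Rightarrow (3)$, where I would build the isomorphism explicitly from a basis. By \cite[Corollary 3.6]{MarquesMoore}, $V$ admits a basis $\mathcal{B} = \{u_i\}_{i\in I}\subseteq Q(V)^*$, and by (2), for each $i\in I$ there exists a multiplicative automorphism $\theta_i$ with $+_{u_i} = +_{\theta_i}$. Setting $\boldsymbol{\theta} = (\theta_i)_{i\in I}$, I would define $\psi : V \to F^{\boldsymbol{\theta},\boldsymbol{\operatorname{Id}}}$ by sending $\sum_{i\in I}\alpha_i u_i$ (uniquely written in the basis with almost-all-zero coefficients) to $(\alpha_i)_{i\in I}$. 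The delicate point is verifying additivity: writing $v = \sum_i \alpha_i u_i$ and $w = \sum_i \beta_i u_i$, the sum $v+w$ has the $i$-th coefficient $\alpha_i +_{u_i} \beta_i = \alpha_i +_{\theta_i}\beta_i$, which matches componentwise addition in $F^{\boldsymbol{\theta},\boldsymbol{\operatorname{Id}}}$ (where $+_{\bf e_i} = +_{\theta_i}$ by Definition-Lemma \ref{multautnvs}). Scalar multiplication is preserved since the monoid action satisfies $\alpha \cdot (\alpha_i u_i) = (\alpha \alpha_i) u_i$, matching $\alpha \cdot_{\boldsymbol{\operatorname{Id}}}(\alpha_i)_i = (\alpha\alpha_i)_i$. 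Bijectivity follows from the basis property, closing the cycle.

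The subtlety to watch carefully is that (2) only guarantees the existence of a multiplicative automorphism for each $u \in Q(V)^*$, but the definition of a multiplicative near-vector space requires coherent tuples indexed by a fixed basis; the construction above overcomes this by making a single choice of $\theta_i$ per basis vector, which is enough because the decomposition of vectors in $V$ along $\mathcal{B}$ is unique.
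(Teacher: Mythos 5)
Your proposal is correct and follows essentially the same route as the paper: the key implications $(1)\Rightarrow(2)$ (via Lemma \ref{homoplus} and Theorem \ref{nvss}) and $(2)\Rightarrow(3)$ (building the $F$-isomorphism coordinatewise from a basis $\mathcal{B}\subseteq Q(V)^*$ with one multiplicative automorphism chosen per basis vector, and using $+_{b}=+_{\theta_b}=+_{\mathbf{e}_b}$ for additivity) are exactly the paper's arguments, and $(3)\Leftrightarrow(4)$ is likewise handled by Lemma \ref{ismorphic}. The only difference is the bookkeeping of the cycle (you close it with the trivial $(3)\Rightarrow(1)$ instead of the paper's equally trivial $(4)\Rightarrow(1)$), which is immaterial.
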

\begin{proof}
$(1) \Rightarrow (2)$ Suppose $(V,F)$ is a multiplicative near-vector space. Then, there is an $F$-isomorphism $\phi: V \rightarrow F^{\boldsymbol{\sigma},\boldsymbol{\rho}}$. Let $u \in Q(V)^{*}$. By Lemma \ref{homoplus}, we have $+_{u} = +_{\phi(u)}$. Furthermore, $\phi(Q(F^{\boldsymbol{\sigma},\boldsymbol{\rho}})) = Q(V)$. Let $\mathcal{S}$ be the set of representatives as defined in Theorem \ref{nvss}. Then, there is $s \in \mathcal{S}$ and $\gamma  \in F^{*}$ such that $+_{\phi(u)} = +_{\sigma_{s}\circ \rho_{s}\circ \varphi_{\gamma}}$ and so $+_{u} = +_{\sigma_{s}\circ \rho_{s}\circ \varphi_\gamma}$.   

\noindent $(2) \Rightarrow (3)$ Suppose that for all $u \in Q(V)^{*}$, $+_{u} = +_{\sigma}$ for some $\sigma$ multiplicative automorphism of $F$. Let $\mathcal{B}$ be a basis of $V$ with $\mathcal{B} \subseteq Q(V)$ and let $\boldsymbol{\sigma}_{\mathcal{B}} = (\sigma_{b})_{b \in \mathcal{B}}$ such that $+_{b} = +_{\sigma_{b}}$ for all $b \in \mathcal{B}$. We define the map $\phi: V \rightarrow F^{\boldsymbol{\sigma_{\mathcal{B}}}, {\bf Id}}$ be sending $b$ to ${\bf e}_{b}$. Extending this map linearly, we have $\phi \left(\sum_{b \in \mathcal{B}}\alpha_{b} \cdot b \right) = {}^{\boldsymbol{\sigma_{\mathcal{B}}}\! \!}\sum_{b \in \mathcal{B}}\alpha_{b}\cdot {\bf e}_{b}$ where $\alpha_{b} \in F$ for all $b \in \mathcal{B}$. By construction, $\phi$ is a bijection. Let $\alpha \in F$ and $\alpha_{b}, \beta_{b} \in F$, for all $b \in \mathcal{B}$. By Lemma \ref{multautnvs}, we have
\begin{align*}
    \phi\left(\sum_{b \in \mathcal{B}} \alpha_{b}\cdot b +\sum_{b \in \mathcal{B}}\beta_{b} \cdot b\right)  = \phi \left(\sum_{b \in \mathcal{B}}(\alpha_{b}+_{\sigma_b}\beta_{b})\cdot b \right) & = {}^{\boldsymbol{\sigma_{\mathcal{B}}}}\sum_{b \in \mathcal{B}}(\alpha_{b} +_{\sigma_b}\beta_{b})\cdot {\bf e}_{b} \\
    & = {}^{\boldsymbol{\sigma_{\mathcal{B}}}\! \!}\sum_{b \in \mathcal{B}}\alpha_{b} \cdot {\bf e}_{b} +_{\boldsymbol{\sigma_{\mathcal{B}}}} {}^{\boldsymbol{\sigma_{\mathcal{B}}}\! \!}\sum_{b \in \mathcal{B}}\beta_{b}\cdot  {\bf e}_{b} \\
    & = \phi \left(\sum_{b \in \mathcal{B}}\alpha_{b}\cdot b \right) +_{\boldsymbol{\sigma_{\mathcal{B}}}} \phi \left(\sum_{b \in \mathcal{B}}\beta_{b}\cdot b \right)
\end{align*}
and 
\begin{align*}
    \phi \left( \alpha \cdot  \sum_{b \in \mathcal{B}}\alpha_{b}\cdot b \right) = \phi \left(\sum_{b \in \mathcal{B}}(\alpha\alpha_{b})\cdot b \right) = {}^{\boldsymbol{\sigma_{\mathcal{B}}}\! \!}\sum_{b \in \mathcal{B}}\alpha \alpha_{b}\cdot {\bf e}_{b} = \alpha \cdot {}^{\boldsymbol{\sigma_{\mathcal{B}}}\! \!}\sum_{b \in \mathcal{B}} \alpha_{b}\cdot {\bf e}_{b} = \alpha \cdot \phi\left(\sum_{b \in \mathcal{B}}\alpha_{b} b\right)
\end{align*}
which shows that $\phi$ is a homomorphism.

\noindent $(3) \Leftrightarrow (4)$ This is by Lemma \ref{ismorphic}. 

\noindent $(4) \Rightarrow (1)$ Suppose $V$ is $F$-isomorphic to $F^{\boldsymbol{\operatorname{Id}},\boldsymbol{\theta}}$ as near-vector spaces where $\boldsymbol{\theta}$ is a tuple of multiplicative automorphisms. $(F^{\boldsymbol{\operatorname{Id}},\boldsymbol{\theta}},F)$ is a special case of $(F^{\boldsymbol{\sigma},\boldsymbol{\rho}},F)$ and so $(V,F)$ is a multiplicative near-vector space.
\end{proof}

\noindent A fundamental question in the study of near-vector spaces is whether infinite direct products preserve the near-vector space structure. We show that this holds under a natural finiteness assumption on the set of near-fields induced by multiplicative automorphisms.
\begin{theo}  
\label{infprod}  
Let $(F, +, \cdot)$ be a near-field such that the set of near-fields induced by its multiplicative automorphisms is finite and $F$ is a finite dimensional vector space over its distributive elements. Let $\{V_k\}_{k \in I}$ be a family of multiplicative near-vector spaces over $F$. Then, the product $\prod_{k \in I} V_k$ is a near-vector space over $F$.  
\end{theo}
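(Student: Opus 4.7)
My plan is to (i) reduce each $V_k$ to the canonical form $F^{\boldsymbol{\theta}_k,\boldsymbol{\operatorname{Id}}}$ via Theorem \ref{mult}, (ii) exploit the finiteness hypothesis to partition the global index set into finitely many homogeneous blocks, and (iii) use the finite-dimensionality of $F$ over $F_{\fr{d}}$ to write any element of the product as a finite sum of quasi-kernel elements.

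First I would invoke Theorem \ref{mult} to identify each $V_k$ with $F^{\boldsymbol{\theta}_k,\boldsymbol{\operatorname{Id}}}$ for some tuple $\boldsymbol{\theta}_k=(\theta_{k,j})_{j\in I_k}$ of multiplicative automorphisms. Setting $J=\bigsqcup_{k\in I} I_k$, the product $\prod_{k\in I} V_k$ is the set of tuples $(v_{k,j})_{(k,j)\in J}$ with per-$k$ finite support, equipped with componentwise scalar multiplication and componentwise $+_{\theta_{k,j}}$-addition; the $F$-space axioms and the freeness of the action are then immediate, so the only substantive condition in Definition \ref{NVS2} is that the quasi-kernel generates the product additively. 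Next, by Lemma \ref{nearfieldauto} and the hypothesis, only finitely many additions $+_{\sigma_1},\dots,+_{\sigma_n}$ arise from multiplicative automorphisms of $F$, so I would partition $J=\bigsqcup_{\ell=1}^n J_\ell$ by $J_\ell=\{(k,j):+_{\theta_{k,j}}=+_{\sigma_\ell}\}$. For each $\ell$, set $K_\ell=\sigma_\ell^{-1}(F_{\fr{d}})$; applying Lemma \ref{divisionring} with $\sigma=\sigma_\ell^{-1}$ and Lemma \ref{finitedimensional} shows that $(F,+_{\sigma_\ell})$ is a right vector space over $(K_\ell,+_{\sigma_\ell},\cdot)$ of dimension $d:=\dim_{F_{\fr{d}}}F<\infty$, and I would fix a basis $\{a_1^{(\ell)},\ldots,a_d^{(\ell)}\}\subseteq F$ for each.

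The main decomposition step goes as follows. Given $v=(v_{k,j})\in\prod_k V_k$, for each $(k,j)\in J_\ell$ write $v_{k,j}={}^{\sigma_\ell\!}\sum_{i=1}^d a_i^{(\ell)}\lambda_{i,k,j}^{(\ell)}$ uniquely with $\lambda_{i,k,j}^{(\ell)}\in K_\ell$, and define $w_{\ell,i}\in\prod_k V_k$ by $(w_{\ell,i})_{k,j}=\lambda_{i,k,j}^{(\ell)}$ if $(k,j)\in J_\ell$ and $0$ otherwise; the per-$k$ finite-support condition is inherited from $v$. Each $w_{\ell,i}$ lies in $Q(\prod_k V_k)$: for all $\alpha,\beta\in F$ and every $(k,j)$ with $\lambda_{i,k,j}^{(\ell)}\neq 0$, the right-distributivity of $\sigma_\ell(\lambda_{i,k,j}^{(\ell)})\in F_{\fr{d}}$ yields $\alpha\lambda_{i,k,j}^{(\ell)}+_{\sigma_\ell}\beta\lambda_{i,k,j}^{(\ell)}=(\alpha+_{\sigma_\ell}\beta)\lambda_{i,k,j}^{(\ell)}$, so the uniform witness $\gamma=\alpha+_{\sigma_\ell}\beta$ satisfies $\alpha w_{\ell,i}+\beta w_{\ell,i}=\gamma w_{\ell,i}$ across all components at once (trivially at the zero components). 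The identity $v=\sum_{\ell=1}^n\sum_{i=1}^d a_i^{(\ell)}\cdot w_{\ell,i}$, computed inside $\prod_k V_k$ and using that $+_{\theta_{k,j}}=+_{\sigma_\ell}$ on each $J_\ell$, is then a finite sum of at most $nd$ scalar multiples of quasi-kernel elements and exhibits $v$ in the additive subgroup generated by $Q(\prod_k V_k)$.

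The main obstacle is precisely this generation step, and it is where both hypotheses enter crucially: the finiteness of the set of induced near-fields bounds the outer sum over $\ell$, while the finite-dimensionality of $F$ over $F_{\fr{d}}$ bounds the inner sum over $i$. Without either assumption one is forced into an a priori infinite sum, inadmissible in an abelian group; indeed the counterexample announced in Section \ref{real} shows that the conclusion can genuinely fail when the finiteness hypothesis on induced near-fields is dropped.
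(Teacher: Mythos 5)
Your proposal is correct and follows essentially the same route as the paper's proof: reduce via Theorem \ref{mult} to the form $F^{\boldsymbol{\theta}_k,\boldsymbol{\operatorname{Id}}}$, use the finiteness hypothesis to split the indices into finitely many classes with a common addition, transport the finite-dimensionality of $F$ over $F_{\fr{d}}$ through Lemmas \ref{divisionring} and \ref{finitedimensional}, and observe that tuples whose components have distributive $\sigma$-images lie in the quasi-kernel, so every element is a finite sum of quasi-kernel elements. The only difference is organizational: the paper packages the index classes via the equivalence of Definition-Theorem \ref{pluses}, swaps the products, proves each homogeneous factor $\prod_{k}V_{k_t}$ is a near-vector space and then invokes finite coproducts (Definition-Lemma \ref{V^I}), whereas you carry out the same generation argument directly in the full product in one pass.
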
  

\begin{proof}  
By Theorem \ref{mult}, we may assume that each $V_k$ is of the form $F^{\boldsymbol{\sigma}_k, \mathbf{Id}}$ for some tuple of multiplicative automorphisms $\boldsymbol{\sigma}_k$. We seek to show that $\prod_{k \in I} V_k$ is a near-vector space.  \\
For each $k \in I$, we write $\boldsymbol{\sigma}_k = (\sigma_{k_i})_{i \in J_k}$ for some index set $J_k$, and  we  define $J := \bigoplus_{k \in I} J_k$. Consider the equivalence relation $\overset{..}{\sim}$ on $J$ as given in Definition-Theorem \ref{pluses}, and let $\mathcal{T}$ be a full set of representatives of this equivalence relation. By assumption, $\mathcal{T}$ is finite. Given $t \in \mathcal{T}$, we set $J_{k_t}:=\{ i \in J_k \mid k_i \overset{..}{\sim} t \}$, when $J_{k_t} \neq \emptyset$, $\boldsymbol{\sigma}_{k_t}  :=  (\sigma_{k_i})_{i \in J_k}$, and 
\begin{equation*}
    V_{k_{t}} = \begin{cases} F^{\boldsymbol{\sigma}_{k_t}, \boldsymbol{{\bf Id}}}  & \text{if } J_{k_t} \neq \emptyset; \\
    \{0\} &  \text{otherwise}.
        
    \end{cases}
    \end{equation*}
\\
Thus, we can decompose each $V_k$ as  
\[
V_k = \bigoplus_{t \in \mathcal{T}} V_{k_t} = \prod_{t \in \mathcal{T}} V_{k_t}.\]  
\\
Thus, we obtain  
\[
\prod_{k \in I} V_k = \prod_{k \in I} \prod_{t \in \mathcal{T}} V_{k_t} = \prod_{t \in \mathcal{T}} \prod_{k \in I} V_{k_t}.
\]  
For each $t \in \mathcal{T}$, we claim that $\prod_{k \in I} V_{k_t}$ is a near-vector space. In fact, according to the definition of $\overset{..}{\sim}$, for any non-zero element $v \in \prod_{k \in I} V_{k_t}$, the operation $+_v$ is given by $+_{\sigma_t}$. We show that 
$$Q\left(\prod_{k \in I} V_{k_t} \right)= \left\{\lambda \cdot (l_{k})_{k \in I} \mid \lambda \in F, (l_{k})_{k \in I} \in \prod_{k \in I} V_{k_t}, \text{ and } \sigma_t( l_{k}) \in F_{\fr{d}} \text{ for all } k \in I\right\} .$$
 Let $(\alpha_{k})_{k \in I} \in \prod_{k \in I}V_{k_{t}}$. We know that $((F,+)\cdot)$ over $(F_{\fr{d}},+,\cdot)$ is a finite dimensional vector space. So by Lemma \ref{finitedimensional} and Lemma \ref{divisionring} $2.$, $((F,+_{\sigma_{t}}),\cdot)$ over $(\sigma^{-1}_{t}(F_{\fr{d}}),+_{\sigma_{t}},\cdot)$ is a finite dimensional vector space. Let $\mathfrak{D}$ be a finite basis for $((F,+_{\sigma_{t}}),\cdot)$ over $(\sigma^{-1}_{t}(F_{\fr{d}}),+_{\sigma_{t}},\cdot)$. Then, we have  
$\alpha_{k} = \sum_{\delta_k \in \mathfrak{D}}\delta_k \lambda_{\delta_k}$,
for some $\lambda_{\delta_k} \in \sigma^{-1}_{t}(F_{\fr{d}})$ for all $k \in I$. So, we have that 
\begin{align*}
    (\alpha_{k})_{k \in I} = \left(\sum_{\delta \in \mathfrak{D}}\delta \lambda_{\delta_{k}}\right)_{k \in I} = \sum_{\delta \in \mathfrak{D}}\delta( \lambda_{\delta_{k}})_{k \in I}. 
\end{align*}
Since $\lambda_{\delta_k} \in \sigma^{-1}_{t}(F_{\fr{d}})$, it is clear that $ \sigma_{t}(\lambda_{\delta_{k}}) \in F_{\fr{d}}$ for all $k \in I$. Hence, $\delta(\lambda_{\delta_{k}})_{k \in I} \in Q(\prod_{k \in I}V_{k_t})$ which shows that $Q(\prod_{k \in I}V_{k_t})$ generates $\prod_{k \in I}V_{k_t}$. This proves that it is a near-vector space over $F$.  \\
Since the category of near-vector spaces admits finite coproducts (see Lemma \ref{V^I}), we conclude that $\prod_{k \in I} V_k$ is a near-vector space over $F$.  
\end{proof}

\begin{rem} \label{infprodrem}  
Let $F$ be a scalar group that admits only finitely many addition operations turning it into a near-field and $F$ is a finite dimensional vector space over its distributive elements. From the proof of Theorem \ref{infprod}, it follows that any infinite product of near-vector spaces over $F$ remains a near-vector space. However, determining all possible addition operations on a given scalar group is generally a difficult problem (see \cite{BM2024}).  
\end{rem}  

\section{Properties of multiplicative near-vectors over certain fields}
In this section, we explore the properties of multiplicative automorphisms and multiplicative near-vector spaces over finite fields, the real field, and the complex field. Our aim is to show that the tools developed in \cite{BM2024} allow for explicit computations of these spaces. This framework offers concrete access to a wide class of multiplicative near-vector spaces over these fields.
\subsection{Multiplicative near-vectors over finite fields}
Let \( p \) be a prime number and \( n \in \mathbb{N} \), and consider \( F \), the finite field of order \( p^n \). It is well known that the multiplicative group of \( F \) is cyclic of order \( p^n - 1 \) (see \cite[Section 2]{BM2024}). In the same section, it is established that the group of multiplicative automorphisms of \( F \) is isomorphic to the unit group of \( \mathbb{Z}/(p^n -1)\mathbb{Z} \). These automorphisms are given by  
\[
\phi_{\alpha}: F \to F, \quad x \mapsto x^{\alpha}
\]
for some \( \alpha \in \mathbb{N} \) such that \( \alpha \) is invertible modulo \( p^n - 1 \).  

\noindent It is well known that the full automorphism group of \( (F, +, \cdot) \) is given by 
\[
\operatorname{Aut}(F, +, \cdot) = \langle F_p \rangle,
\]
where \( F_p \) denotes the Frobenius map \( x \mapsto x^p \), and \( \langle F_p \rangle \) is the subgroup it generates in \( (U_{p^n -1}, \odot) \). By \cite[Corollary 1.7]{BM2024}, it follows that the set of all near-fields induced by multiplicative automorphisms \( \mathcal{M}(F)_{\operatorname{aut}}\) of \( F \) satisfies  
\[
\mathcal{M}(F)_{\operatorname{aut}} \simeq U_{p^n -1}/ \langle [p] \rangle.
\]  

\noindent Now, let \( \alpha, \beta \in F \) and consider the multiplicative automorphisms \( \phi_{\alpha}, \phi_{\beta}: F \to F \). By Lemma \ref{nearfieldauto}, the sum operations associated with these automorphisms, \( +_{\phi_{\alpha}} \) and \( +_{\phi_{\beta}} \), coincide if and only if \( \phi_{\alpha} \circ \phi_{\beta}^{-1} \in \langle F_p \rangle \). That is, there exists $m \in \{ 0, \cdots, n\}$ such that
\[
\phi_{\alpha}(\phi_{\beta}^{-1}(x)) = F_p^m(x) \quad \text{for all } x \in F,
\]
which is equivalent to  
\[
x^{\frac{\alpha}{\beta}} = x^{p^m}.
\]
This, in turn, simplifies to the congruence  
\[
\alpha \equiv p^m \beta \mod (p^n - 1).
\]  

\noindent Finally, by Theorem \ref{infprod}, any infinite direct product of multiplicative near-vector spaces over a finite field remains a near-vector space. More generally, by Remark \ref{infprodrem} and \cite[Section 3]{BM2024}, this result extends to arbitrary near-vector spaces over finite fields.

\subsection{Multiplicative near-vectors over real fields}\label{real}

The multiplicative automorphisms of \( \mathbb{R} \) are given by  
\[
\phi_{\alpha}: \mathbb{R}\rightarrow \mathbb{R}, \quad x \mapsto \begin{cases} 
    x^{\alpha} & \text{if } x \geq 0, \\ 
    -(-x)^{\alpha} & \text{if } x < 0 
\end{cases}
\]
for some \( \alpha \in \mathbb{R}^{*} \) (see \cite[Section 2]{BM2024}). It is also noted in \cite[Section 2]{BM2024} that the identity morphism is the only field automorphism of \( (\mathbb{R},+, \cdot) \). Consequently, for any distinct \( \alpha, \beta \in \mathbb{R}^{*} \), we have \( +_{\phi_{\alpha}} \neq +_{\phi_{\beta}} \).  

\noindent We now show that the infinite product  
\[
\prod_{\alpha \in \mathbb{R}^{*}}F^{\boldsymbol{\Phi}, {\bf Id}}
\]
where \( \boldsymbol{\Phi} = (\phi_{\alpha} )_{\alpha \in \mathbb{R}^{*}} \), does not form a near-vector space.  

\noindent To establish this, we first prove that  
\[
Q \left( \prod_{\alpha \in \mathbb{R}^{*}}F^{\boldsymbol{\Phi},{\bf Id}} \right) = \bigcup_{\alpha \in \mathbb{R}^{*}} F {\bf e}_{\alpha}
\]
where \( {\bf e}_{\alpha} = (\delta_{\alpha,\beta})_{\beta \in \mathbb{R}^{*}} \).  

\noindent Let \( (v_{\alpha})_{\alpha \in \mathbb{R}^{*}} \in Q \left( \prod_{\alpha \in \mathbb{R}^{*}}F^{\boldsymbol{\Phi},{\bf Id}} \right) \) and take \( a,b \in \mathbb{R} \). Then, there exists \( \gamma \in \mathbb{R} \) such that  
\[
a(v_{\alpha})_{\alpha \in \mathbb{R}^{*}} +_{\boldsymbol{\Phi}} b(v_{\alpha})_{\alpha \in \mathbb{R}^{*}} = \gamma (v_{\alpha})_{\alpha \in \mathbb{R}^{*}}.
\]
Since addition is defined componentwise, this translates to  
\[
(a+_{\phi_{\alpha}}b)v_{\alpha} = \gamma v_{\alpha} \quad \text{for all } \alpha \in \mathbb{R}^{*}.
\]
For all \( v_{\alpha} \neq 0 \), we obtain  
\[
a+_{\phi_{\alpha}}b = \gamma.
\]
Thus, for any two indices \( \alpha, \beta \) such that \( v_{\alpha} \neq 0 \) and \( v_{\beta} \neq 0 \), it must hold that  
\[
a+_{\phi_{\alpha}}b = a+_{\phi_{\beta}}b.
\]
However, this is impossible since \( +_{\phi_{\alpha}} \neq +_{\phi_{\beta}} \) whenever \( \alpha \neq \beta \). This contradiction implies that at most one \( v_{\alpha} \) is nonzero, meaning that every element of \( Q \left( \prod_{\alpha \in \mathbb{R}^{*}}F^{\boldsymbol{\Phi},{\bf Id}} \right) \) belongs to some \( F{\bf e}_{\alpha} \), proving the inclusion  
\[
Q \left( \prod_{\alpha \in \mathbb{R}^{*}}F^{\boldsymbol{\Phi},{\bf Id}} \right) \subseteq \bigcup_{\alpha \in \mathbb{R}^{*}} F {\bf e}_{\alpha}.
\]
The reverse inclusion is straightforward.  

\noindent Now, suppose that \( Q \left( \prod_{\alpha \in \mathbb{R}^{*}}F^{\boldsymbol{\Phi},{\bf Id}} \right) \) generates \( \prod_{\alpha \in \mathbb{R}^{*}}F^{\boldsymbol{\Phi},{\bf Id}} \) as an additive group. Then, for any \( v \in \prod_{\alpha \in \mathbb{R}^{*}}F^{\boldsymbol{\Phi},{\bf Id}} \) with infinitely many nonzero coordinates, we would have  
\[
v = \sum_{\alpha \in T}\lambda_{\alpha}{\bf e}_{\alpha}, \quad \text{where } T \finsub \mathbb{R}^{*}.
\]
This contradicts our assumption that \( v \) has infinitely many nonzero coordinates, completing the proof.  

\noindent This result holds for any scalar group allowing infinitely many distinct additions, which transforms the scalar group into a near-field. Consequently, near-vector spaces over such scalar groups do not admit infinite products.  

\subsection{Multiplicative near-vectors over complex fields}

Let  
\[
\mathbb{S} = \{s \in \mathbb{C} \mid |s| = 1\}
\]
denote the unit circle when \( F = \mathbb{R} \) or \( F = \mathbb{C} \). For a complex number \( u \), we denote its complex conjugate by \( \overline{u} \).  

\noindent By \cite[Section 2]{BM2024}, the continuous multiplicative automorphisms of \( \mathbb{C}^* \) are of the form:  
\[
\begin{array}{cccl}  
\epsilon_\alpha : &\mathbb{C}^*  & \rightarrow & \mathbb{C}^* \\ 
& z=rs & \mapsto & r^\alpha s
\end{array} 
\quad \text{or} \quad
\begin{array}{cccl}  
\overline{\epsilon_\alpha} : & \mathbb{C}^*  & \rightarrow & \mathbb{C}^* \\ 
& z=rs & \mapsto & r^\alpha\overline{s}
\end{array} 
\]
where \( r \in \mathbb{R}_{>0} \), \( s \in \mathbb{S} \), and \( \alpha \in \mathbb{C} \setminus i \mathbb{R} \).  

\noindent Furthermore, \cite[Section 2]{BM2024} establishes that the only field automorphisms of \( (\mathbb{C},+, \cdot) \) are the identity and complex conjugation. Given two field automorphisms \( \phi, \phi' \) such that $+_\phi= +_{\phi'}$, Lemma \ref{nearfieldauto}, implies that either \( \phi = \phi' \) or \( \phi = \overline{\epsilon_1} \circ \phi' \). Additionally, by \cite[Theorem 2.3]{BM2024}, we have the relations:  
\begin{itemize}
    \item 
If \( \phi' = \epsilon_{\alpha} \), then \( \phi = \overline{\epsilon_{\overline{\alpha}}} \).  
\item If \( \phi' = \overline{\epsilon_{\alpha}} \), then \( \phi = \epsilon_{\overline{\alpha}} \).
\end{itemize}

\noindent As a consequence, just as in the real case, there exist infinitely many distinct additions, preventing all infinite products of multiplicative near-vector spaces over \( \mathbb{C} \) to be near-vector space as proven in the previous section.

\noindent In classical linear algebra, the algebraic closure of \( \mathbb{C} \) simplifies computations involving eigenvalues, eigenvectors, characteristic polynomials, and Jordan forms. However, these tasks become more challenging over \( \mathbb{R} \) since not every polynomial splits into linear factors.  

\noindent To overcome this, vector spaces are often "complexified"— a process that extends a real vector space into a complex vector space while preserving meaningful algebraic structure. This allows us to exploit the algebraic advantages of \( \mathbb{C} \) and later translate the results back to the real setting (see \cite{LA}).  

\noindent A similar notion of complexification is available for multiplicative near-vector spaces.  

\noindent Let \( \alpha \in \mathbb{R}^{*} \), and consider \( \mathbb{R}_{\phi_{\alpha}} \) as a subfield of \( \mathbb{C}_{\epsilon_{\alpha}} \) (or \( \mathbb{C}_{\overline{\epsilon_{\alpha}}} \)). We define the inclusion maps:  
\[
\iota_{\alpha}: \mathbb{R}_{\phi_{\alpha}} \rightarrow \mathbb{C}_{\epsilon_{\alpha}}, \quad x \mapsto x
\]
\[
\text{(resp. } \overline{\iota_{\alpha}}: \mathbb{R}_{\phi_{\alpha}} \rightarrow \mathbb{C}_{\overline{\epsilon_{\alpha}}}, \quad x \mapsto x\text{).}
\]
This forms a simple field extension of degree \( 2 \), with a basis  
\[
\{1, \epsilon_{\alpha}^{-1}(i)\} \quad \text{(resp. } \{ 1, \overline{\epsilon_{\alpha}}^{-1}(i) \} \text{).}
\]
The field \( \mathbb{C}_{\epsilon_{\alpha}} \) is generated over \( \mathbb{R}_{\phi_{\alpha}} \) by \( \epsilon_{\alpha}^{-1}(i) \) (resp. \( \overline{\epsilon_{\alpha}}^{-1}(i) \)), and the minimal polynomial of this extension is  
\[
X^2 +_{\epsilon_{\alpha} } 1 \quad \text{(resp. } X^2 +_{\overline{\epsilon_{\alpha}}} 1\text{).}
\]

\noindent Now, consider a multiplicative real near-vector space \( \mathbb{R}^{\boldsymbol{\sigma},\boldsymbol{\rho}} \), where  
\[
\boldsymbol{\sigma} = ( \phi_{\alpha} )_{\alpha \in T}, \quad \boldsymbol{\rho} = ( \phi_{\beta} )_{\beta \in S}
\]
for some \( T,S \subseteq \mathbb{R}^{*} \).  

\noindent We can define its complexification in at least two possible ways:  
\[
\mathbb{C}^{\boldsymbol{\tilde{\sigma}}, \boldsymbol{\tilde{\rho}}}, \quad \text{where } \boldsymbol{\tilde{\sigma}} = (\epsilon_{\alpha})_{\alpha \in T}, \quad \boldsymbol{\tilde{\rho}} = (\epsilon_{\beta})_{\beta \in S}
\]
or  
\[
\mathbb{C}^{\boldsymbol{\overline{\sigma}}, \boldsymbol{\overline{\rho}}}, \quad \text{where } \boldsymbol{\overline{\sigma}} = (\overline{\epsilon_{\alpha}})_{\alpha \in T}, \quad \boldsymbol{\overline{\rho}} = (\overline{\epsilon_{\beta}})_{\beta \in S}.
\]

\noindent With these constructions of complexification in hand, we can extend those classical linear algebra techniques to the framework of near-vector spaces.

\mb{}

\end{document}